\newtheorem{thm}{Theorem}[section]
\newtheorem{prop}[thm]{Proposition}
\newtheorem{lem}[thm]{Lemma}
\newtheorem{cor}[thm]{Corollary}
\newtheorem{Observation}{Observation}
\newtheorem{problem}{Problem}[section]
\theoremstyle{definition}
\newtheorem{defn}[thm]{Definition} 
\newtheorem{eg}[thm]{Example}
\tikzstyle{1} = [diamond,  fill = mygray,  inner sep=7.2pt]
\tikzstyle{2} = [diamond,  fill = mygray,  inner sep=14pt]
\definecolor{mygray}{gray}{0.8}
\author[Aaron Slobodin, Gary MacGillivray, and Wendy Myrvold]{
  Aaron Slobodin\affiliationmark{1}\thanks{Supported by funding from Natural Sciences and Engineering Research of Council of Canada (NSERC).}
  \and Gary MacGillivray\affiliationmark{1}\thanks{Supported by NSERC.}
  \and Wendy Myrvold\affiliationmark{2}\thanks{Supported by NSERC.}
}
\title{Proving exact values for the $2$-limited broadcast domination number on grid graphs}
\affiliation{
  Department of Mathematics and Statistics, University of Victoria, Victoria, B.C., Canada\\
  Department of Computer Science, University of Victoria, Victoria, B.C., Canada
}
\keywords{Graph theory, broadcast domination, limited broadcast domination, linear programming, integer linear programming, grid graphs}
\begin{document}
\pagenumbering{arabic}
\publicationdata
{vol. 25:2 }
{2023}
{21}
{10.46298/dmtcs.11478}
{2023-06-16; 2023-06-16; 2023-10-05}
{2023-10-06}
\maketitle
\begin{abstract}
We establish exact values for the $2$-limited broadcast domination number of various grid graphs, in particular $C_m\square C_n$ for $3 \leq m \leq 6$ and all $n\geq m$, $P_m \square C_3$ for all $m \geq 3$, and $P_m \square C_n$ for $4\leq m \leq 5$ and all $n \geq m$. We also produce periodically optimal values for $P_m \square C_4$ and $P_m \square C_6$ for $m \geq 3$, $P_4 \square P_n$ for $n \geq 4$, and $P_5 \square P_n$ for $n \geq 5$. Our method completes an exhaustive case analysis and eliminates cases by combining tools from linear programming with various mathematical proof techniques. 
\end{abstract}

\section{Introduction}\label{intro}
Suppose there is a transmitter located at each vertex of a graph $G$. A \textit{$k$-limited broadcast} $f$ on $G$ is a function $f: V(G) \mapsto \left\{ 0,1,\dots, k \right\} $. The integer $f(v)$ represents the \textit{strength} of the broadcast from $v$, where $f(v)=0$ means the transmitter at $v$ is not broadcasting. A broadcast of positive strength $f(v)$ from $v$ is \textit{heard} by all vertices $u$ such that $d(u,v) \leq f(v)$, where $d(u,v)$ is the distance between the $u$ and $v$ in $G$. A broadcast $f$ is \textit{dominating} if each vertex of $G$ hears the broadcast from some vertex. The \textit{cost} of a broadcast $f$ is $\sum_{v \in V(G)} f(v)$. The \textit{$k$-limited broadcast domination number} $\gamma_{b,k}(G)$ of a graph $G$ is the minimum cost of a $k$-limited dominating broadcast on $G$.

The $k$-limited broadcast domination number can be seen to be the optimum solution to \ref{ILP} shown below. Let $G$ be a graph and fix $1 \leq k \leq rad(G)$, where $rad(G)$ is the \textit{radius} of $G$. For each vertex $i \in V(G)$ and $\ell \in \left\{ 1,2,\dots, k \right\} $, let $x_{i,\ell} = 1$ if vertex $i$ is broadcasting at strength $\ell$ and $0$ otherwise.  
\renewcommand{\theequation}{ILP 1.1}
\begin{equation}\label{ILP}
\begin{aligned}
\textnormal{Minimize:} \quad &\sum_{\ell=1}^{k} \sum_{i \in V(G)} \ell\cdot x_{i,\ell}\\
\textnormal{Subject to:} \quad &(1) \quad\sum_{\ell=1}^{k} \sum_{
\substack{
\text{$i \in V(G) \textnormal{ s.t.}$}\\
\text{ $d(i,j) \leq \ell$}}
}
 x_{i,\ell} \geq 1, \quad \textnormal{for each vertex } j \in V(G),\\
&(2) \quad  x_{i,\ell} \in \left\{ 0,1 \right\}  \quad \textnormal{for each vertex } i \in V(G) \textnormal{ and } \ell \in \left\{ 1,2,\dots,k \right\}.\\
\end{aligned}
\end{equation}
\renewcommand{\theequation}{\arabic{equation}}

The $k$-limited broadcast domination number of a graph was first defined in \cite{ogerwin} (also see \cite{erwin}). The first major results for $k$-limited broadcast domination were given in 2018 and are specific to $2$-limited dominating broadcasts in trees \cite{thing}. These results were generalized to $k$-limited dominating broadcasts to give a best possible upper bound of $ \gamma_{b,k} (T) \leq \left\lceil \frac{ k+2 }{ k+1 }\cdot \frac{ n }{ 3 } \right\rceil $, where $T$ is a tree on $n$ vertices \cite{thing2}. Specific to $2$-limited dominating broadcasts, if $G$ is a connected graph of order $n$, then $ \gamma_{b,2}(G)\leq \left\lceil \frac{ 4n }{ 9 }  \right\rceil $ and if $G$ is a graph of order $n$ that contains a dominating path, then $ \gamma_{b,2}(G)\leq \left\lceil \frac{ 2n }{ 5 }  \right\rceil $ \cite{thing}. Yang showed that if $G$ is a cubic $(C_4, C_6)-$free graph of order $n$, then $ \gamma_{b,2} (G) \leq \frac{ n }{ 3 } $ \cite{thesis, Henning2021}. Park recently extended this result to cubic $C_4-$free graph of order $n$ \cite{PARK2023178}.

For each fixed positive integer $k$, the problem of deciding whether there exists a $k$-limited dominating broadcast of cost at most a given integer $B$ is NP-complete \cite{thing2, thesis}. The results of \cite{thesis}, respectively, establish $O(n^3),$ $O(n^2),$ $O(n^2),$ and $O(n^3)$ time algorithms for the $k$-limited broadcast domination number of strongly chordal graphs, interval graphs, circular arc graphs, and proper interval bigraphs. The algorithm for $k$-limited broadcast on strongly chordal graphs in \cite{thesis} is a specialization of the $O(n^3)$ time algorithm  for (general) broadcast domination on strongly chordal graphs in \cite{frankmasters, frankrickandgarry}.

The $k$-limited broadcast domination problem is a restriction of the broadcast domination problem in which vertices can broadcast with strength up to $rad(G)$.  The \textit{broadcast domination number} $\gamma_b(G)$ of a graph $G$ is optimum solution to the ILP obtained from \ref{ILP} by setting $k = rad(G)$. The broadcast domination number of a graph was introduced in \cite{ogerwin}. Erwin proved that, for every non-trivial connected graph,
\begin{equation*}
\begin{aligned}
\left\lceil \frac{ diam(G) +1}{3 }  \right\rceil \leq   \gamma_b(G) \leq \textnormal{ min} \left\{ rad(G), \gamma(G) \right\}.
\end{aligned}
\end{equation*}
It immediately follows that $ \gamma_b(P_n) = \left\lceil \frac{ n }{ 3 }  \right\rceil $. Broadcast domination in trees was first explored in \cite{herkmasters} (also see \cite{COCKAYNE20111235, HERKE20095950}). This work establishes $ \gamma_b(T) \leq \left\lceil \frac{ n }{ 3 }  \right\rceil $, where $T$ is a tree of order $n$. The broadcast domination number is known for the Cartesian products of two paths \cite{DUNBAR200659}, two cycles \cite{DUNBAR200659}, and strong grids \cite{grids}. Note that, as $ \gamma_{b} \left( C_{m} \square C_{n} \right) \leq \gamma_{b} \left( P_{m} \square C_{n} \right) \leq \gamma_{b} \left( P_{m} \square P_{n} \right) $, the previously stated results provide bounds for $ \gamma_{b} \left( P_{m} \square C_{n} \right) $. A survey of results on broadcast domination can be found in \cite{Henning2021}.

This paper presents lower bounds for the $2$-limited broadcast domination number of the Cartesian products of two paths, a path and a cycle, and two cycles. Our computational approach completes an exhaustive search of all possible small induced sub-broadcasts of given costs on a graph. Cases which provably cannot be part of an optimal broadcast are then eliminated. This approach can likely be extended to other graphs as well as general $k$-limited broadcast domination.

Some intuition for our method is provided by example in Section \ref{sec:High Level Overview}. Section \ref{sec:backtrack} describes and proves the correctness of the six schemes used to eliminate cases in the exhaustive search. Section \ref{sec:backtrack} concludes with the statement of our main algorithm (Algorithm \ref{alg:thefinalfinal}) to prove lower bounds and the proof of its correctness. Our results are summarized in Section \ref{sec:bactrack:app of methods}. These include exact values for the $2$-limited broadcast domination number of $C_m\square C_n$ for $3 \leq m \leq 6$ and all $n\geq m$, $P_m \square C_3$ for all $m \geq 3$, and $P_m \square C_n$ for $4\leq m \leq 5$ and all $n \geq m$, and periodically optimal values for $P_m \square C_4$ and $P_m \square C_6$ for $m \geq 3$, $P_4 \square P_n$ for $n \geq 4$, and $P_5 \square P_n$ for $n \geq 5$. These results improve upon the bounds in Slobodin's M.Sc. thesis \cite{slobodin}.

\section{Intuition and Definitions}\label{sec:High Level Overview}
This section includes a high-level overview of our method, an example specific to $P_5 \square C_n$, and relevant definitions. 

\begin{defn}
Let $f$ be a $2$-limited broadcast on the graph $G$ and let $X \subseteq V(G)$. Define the \textit{sub-broadcast $g$ induced by $X$} by
\begin{equation*}
\begin{aligned}
g(x) &= \begin{cases}
f(x) & \textnormal{ if } x \in X \textnormal{ and }\\
0 & \textnormal{ otherwise}.
\end{cases}
\end{aligned}
\end{equation*}
\end{defn}

Throughout this paper, we consider a class of graphs $G_{m,n}$ equal to $P_m \square C_n$ or $C_m \square C_n$ for a fixed number of rows $m$. In this way, the vertex in the $i$th row and $j$th column can be denoted by $(i,j)$. The goal is to prove that $\gamma_{b,2}(G_{m,n})$ is greater than or equal to a function $B(m, n)$. We proceed by induction on $n$. After checking the appropriate base cases computationally, we assume the bound holds for all $n < n_0$ for some integer $n_0$. Let $f$ be a $2$-limited dominating broadcast of $G_{m,n_0}$. We choose values $r$ and $s$ such that, if the minimum cost (with respect to $f$) of a sub-broadcast induced by $r$ consecutive columns of $G_{m,n_0}$ is strictly greater than $s$, then $B(m, n_0) \leq \gamma_{b,2}(G_{m,n_0})$. We then exhaustively enumerate (computationally) all possible sub-broadcast induced by $r$ consecutive columns of $G_{m,n_0}$ of cost less than or equal to $s$. If it is possible to conclude that, for each possible sub-broadcast $g$, either $g$ cannot be a sub-broadcast of an optimal $2$-limited dominating broadcast on $G_{m,n_0}$ or $g$ forces $B(m, n_0) \leq \gamma_{b,2}(G_{m,n_0})$, then the desired bounds follows. See Example \ref{firstexample}.

\begin{eg}\label{firstexample}
Consider the following result.
\begin{prop}\cite[Theorem 3]{slobodinfinal}
For $n \geq 3$, 
\begin{equation*}
\begin{aligned}
\gamma_{b,2} \left( P_{5} \square C_{n} \right)   \leq n + \begin{cases}
0 & \textnormal{for } n \equiv 0 \pmod{ 2} \textnormal{ and} \\
1 & \textnormal{for } n \equiv 1 \pmod{ 2}.
\end{cases}
\end{aligned}
\end{equation*}
\end{prop}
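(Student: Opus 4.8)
The statement is an upper bound, so the plan is to exhibit, for each $n$, an explicit $2$-limited dominating broadcast of $P_5 \square C_n$ whose cost equals the claimed value, and then to verify that it dominates and to read off its cost. I would build these broadcasts by concatenating a few periodic ``column gadgets'' around the cycle. The key structural observation is that a strength-$2$ broadcast at the middle-row vertex $(3,j)$ is heard by all of column $j$, by rows $2,3,4$ of columns $j\pm1$, and by row $3$ of columns $j\pm2$; call such a column \emph{full}. A full column costs only $2$ and handles the three interior rows and all of row $3$ very efficiently, but it reaches the boundary rows $1$ and $5$ only in its own column. The remaining work is to cover rows $1$ and $5$, which I would do with \emph{fringe} gadgets: either a weak fringe, a pair of strength-$1$ broadcasts at $(1,j),(5,j)$ covering rows $1,5$ in columns $j-1,j,j+1$, or a strong fringe, a pair of strength-$2$ broadcasts at $(1,j),(5,j)$ covering rows $1,5$ in columns $j-2,\dots,j+2$ (and additionally assisting rows $2,3,4$ near column $j$).

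From these pieces I would assemble two base tiles, each chosen so that its cost equals its length, giving an amortized cost of exactly one per column: a length-$4$ tile consisting of a full column, an empty column, a weak fringe, and an empty column (total cost $4$), and a length-$6$ tile consisting of a full column, two empty columns, a strong fringe, and two empty columns (total cost $6$). The crucial composition property to check is that each tile covers all of its own columns using only its own broadcasts \emph{together with} the full column that begins the following tile; since every tile begins with a full column, any cyclic concatenation of these tiles is automatically dominating. Because every even $n\ge4$ can be written as $4a+6b$ with $a,b\ge0$, placing $a$ copies of the length-$4$ tile and $b$ copies of the length-$6$ tile around $C_n$ gives a dominating broadcast of cost exactly $n$, which settles the even case.

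For odd $n$ I would insert one extra length-$1$ tile, namely a single full column of cost $2$ (one more than its length). Writing $n=4a+6b+1$, valid whenever $n-1\ge4$ is even, yields a dominating broadcast of cost $(n-1)+2=n+1$, as required. The one genuinely small case $n=3$, where $n-1=2$ is not of the form $4a+6b$, I would dispatch by hand with an explicit broadcast of cost $4$, for instance a full column at $(3,0)$ together with strength-$1$ broadcasts at $(1,0)$ and $(5,0)$.

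The main obstacle is the seam verification: I must confirm that the trailing columns of each tile, whose row-$3$ and inner-row coverage spills past the tile boundary, are rescued by precisely the leading full column of the next tile, and that the distances involved are at most $2$ regardless of which tile follows, so that no gap is created at any junction between tiles or at the cyclic wrap-around. A secondary but conceptually clarifying point I would isolate is \emph{why} the bound must jump by one for odd $n$: the full columns that dominate row $3$ recur naturally with even period, so both composable tiles have even length and can realize only even $n$ at the optimal rate of one per column; realizing an odd length forces an odd-length gadget, and the cheapest domination-preserving such gadget costs one unit more than its length.
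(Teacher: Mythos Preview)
The paper does not prove this proposition; it is quoted verbatim from \cite{slobodinfinal} (Theorem~3 there) and used only as an input to the motivating example in Section~2. There is therefore no in-paper argument to compare your proposal against.

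That said, your constructive approach is the standard way such upper bounds are established, and it is correct. The coverage claims for the full column and the two fringe gadgets hold in $P_5$, and the seam analysis goes through: in each of the length-$4$ and length-$6$ tiles, the final column has rows $2,3,4$ dominated exactly by the full column beginning the next tile, while rows $1$ and $5$ are already handled by the fringe inside the tile. Since both tile types begin with a full column, any cyclic concatenation is dominating. Every even $n\ge 4$ is indeed of the form $4a+6b$, and your odd-$n$ insertion of a single extra full column respects the seam invariant (it satisfies the preceding tile's requirement and is self-covering). Your $n=3$ construction also checks out, since in $C_3$ every column is at distance $1$ from column $0$. The only cosmetic point is that your final paragraph about \emph{why} the bound jumps by one for odd $n$ is heuristic rather than a proof of optimality, but the proposition only asks for the upper bound, so this is not needed.
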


Suppose we wish to obtain optimal values for $\gamma_{b,2} (P_5 \square C_n)$ when $n \equiv 0 \pmod{2} $ by proving that $n \leq \gamma_{b,2} (P_5 \square C_n)$. We have that $n \leq \gamma_{b,2} \left( P_{5} \square C_{n} \right) $ for $3 \leq n \leq 16$ by computation. Suppose the bound holds for all $n < n_0$ for some $n_0 > 16$. Let $f$ be a $2$-limited dominating broadcast of $P_{5} \square C_{n_0}$. Let $C$ be the subgraph of $ P_{5} \square C_{n_0} $ induced by the vertices appearing in a minimum cost (with respect to $f$) set of eight consecutive columns (here $r=8$). If the sub-broadcast induced by $V(C)$ has cost at least 8 (here $s = 7$), then $cost(f) \geq n_0$. It is therefore sufficient to consider, for each integer $ x \leq 7$, all possible sub-broadcasts of cost $x$ induced by $V(C)$. If it is possible to conclude that, for each such sub-broadcast $g$, either $g$ cannot be a sub-broadcast of an optimal $2$-limited dominating broadcast on $P_5 \square C_{n_0}$ or $g$ forces $n_0 \leq \gamma_{b,2}(P_5 \square C_{n_0})$, then the desired bounds follows.
\end{eg}

We conclude this section with definitions used throughout the rest of the paper.

\begin{defn}
\sloppy Given two $2$-limited broadcasts $f$ and $g$ on a graph $G$, for each $x \in V(G)$, define 
\begin{equation*}
\begin{aligned}
(f\oplus g)(x)= \textnormal{max} \left\{ f(x), g(x)\right\}
\end{aligned}
\end{equation*}
and
\begin{equation*}
\begin{aligned}
(f\ominus g)(x) &= \begin{cases}
0 & \textnormal{ if } g(x) > 0 \textnormal{ and} \\
f(x) & \textnormal{ otherwise.}
\end{cases}
\end{aligned}
\end{equation*}
\end{defn}

\begin{defn}
If $f$ is a broadcast on $G$, then we say $f$ \textit{dominates} $ y \in V(G)$ if there exists a vertex $x$ such that $f(x) \geq d(x,y)$. Further, we say that $f$ \textit{dominates} $X \subseteq V(G)$ if it dominates every vertex $x \in X$.
\end{defn}

\begin{defn}
Let $f$ be a broadcast on $G$. The \textit{broadcast range}  of $f$ is  the set of vertices which hear a broadcast under $f$.
\end{defn}

\section{Eliminating Possible Induced Sub-broadcasts of Fixed Cost}\label{sec:backtrack}

Fix $m$ and suppose we wish to establish the function $B(n)$ as a lower bound for the $2$-limited broadcast domination number of $G_{m,n}$. A positive fixed number $r =r(m) \geq 5$ of columns is chosen. In the inductive step, we consider $n_0 > r+10$ such that $B(n)$ is a lower bound for $\gamma_{b,2}(G_{m,n})$ for all $n < n_0$. Let $C$ be the subgraph of $G_{m,n_0}$ induced by the vertices of $r$ consecutive columns. We complete an exhaustive search of all possible sub-broadcasts $g$  induced by $V(C)$ and subject each such $g$ to a series of tests in the hope of excluding $g$ or concluding that $g$ forces $B(n_0) \leq \gamma_{b,2}(G_{m,n_0})$. Given $C$, four columns are added to both the left and right of $C$ in order to ensure that the subgraph considered is large enough to include all vertices dominated by any vertex that could potentially dominate some vertex in $C$. 

In summary, our algorithm takes as an input, $H_{m,k} = (P_m \textnormal{ or } C_m) \square P_k$, where $k = r+8$, with columns labelled $c_1, c_2, \dots, c_k$, where the vertices of $C$ are in columns $c_5, c_6, \dots,c_{k-4}$. Note that $(P_m \textnormal{ or } C_m) \square P_k$ is understood to mean $P_m \square P_{k}$ or $C_m \square P_{k}$ dependent upon $G_{m,n}$. Observe that $k \geq 13$ and $n_0 \geq k+3$. The assumptions defined previously are used in Sections \ref{middlesec} through \ref{lastsec} which describe and prove the correctness of the six schemes we use to eliminate sub-broadcasts. These schemes appear in the same order as in Algorithm \ref{alg:thefinalfinal} in Section \ref{sec:middle}.

\subsection{Domination Requirement}\label{middlesec}
Since we are looking for a lower bound for the cost of an optimal $2$-limited dominating broadcast $f$ on $G_{m,n_0}$, any induced sub-broadcast that forces vertices of $G_{m,n_0}$ to not be dominated, can be eliminated. 

\begin{Observation}\label{observation2}
If the sub-broadcast $g$ induced by $V(C)$ does not dominate the vertices of columns $c_7$, $c_8$, $\dots$, $c_{k-6}$, then $g$ cannot be a sub-broadcast of a dominating broadcast. 
\end{Observation}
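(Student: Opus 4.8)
The plan is to prove the contrapositive: assuming $g$ is the sub-broadcast induced by $V(C)$ of some dominating broadcast $f$ on $G_{m,n_0}$ (so that $g(x)=f(x)$ for $x$ in columns $c_5,\dots,c_{k-4}$ and $g(x)=0$ otherwise), I would show that $g$ necessarily dominates every vertex in columns $c_7,\dots,c_{k-6}$. The structural fact driving everything is that the ``core'' columns $c_7,\dots,c_{k-6}$ sit two columns inside the boundary of $C$, whose columns are $c_5,\dots,c_{k-4}$; this two-column buffer is exactly matched to the maximum broadcast strength $2$.

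First I would fix a vertex $v$ in a column $c_j$ with $7 \le j \le k-6$. Since $f$ is dominating, some transmitter $x$ is heard at $v$, i.e. $f(x) \ge d(x,v)$ with $f(x)\ge 1$; because $f$ is $2$-limited, $1 \le f(x) \le 2$ and hence $d(x,v)\le 2$. As the horizontal (column) distance is a lower bound for the graph distance, the column index of $x$ differs from $j$ by at most $2$, and the hypothesis $n_0 \ge k+3$ ensures the cyclic wrap-around in the $C_{n_0}$ factor produces no shorter route, so $x$ lies in one of the columns $c_{j-2},\dots,c_{j+2}$. Next I would invoke the index arithmetic: since $7 \le j \le k-6$ and $k \ge 13$, we have $j-2 \ge 5$ and $j+2 \le k-4$, so each of $c_{j-2},\dots,c_{j+2}$ belongs to $C$. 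Therefore $x \in V(C)$, whence $g(x)=f(x)\ge d(x,v)$ and $g$ dominates $v$. Contrapositively, if $g$ fails to dominate some $v$ in columns $c_7,\dots,c_{k-6}$, then no transmitter of $f$ dominates $v$ either, contradicting that $f$ is a dominating broadcast; thus $g$ cannot be a sub-broadcast of a dominating broadcast.

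The step that needs the most care is the distance bookkeeping between the two graphs, since the exhaustive search operates inside the window $H_{m,k}$ while $f$ lives on $G_{m,n_0}$. I would verify that for a transmitter $x \in V(C)$ and a core vertex $v$ with $d(x,v)\le 2$, the distance measured in $H_{m,k}$ agrees with the distance in $G_{m,n_0}$: any shortest path of length at most $2$ shifts the column index by at most $2$ and hence stays within the window columns, and $n_0$ is large enough that wrap-around contributes no short path. Consequently domination of a core vertex is identical whether computed in $H_{m,k}$ or in $G_{m,n_0}$, so the conclusion obtained for $f$ on $G_{m,n_0}$ transfers verbatim to the search performed on $H_{m,k}$. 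Once this equivalence is pinned down, the remainder is just the elementary column-index counting above, and I do not expect any genuine obstacle beyond stating the buffer inequalities cleanly.
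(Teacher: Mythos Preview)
Your argument is correct and is precisely the reasoning the paper has in mind: the paper states this as an observation without a formal proof, relying on Figure~\ref{fig:domcont} to convey that any broadcasting vertex outside $V(C)$ is at column-distance at least $3$ from columns $c_7,\dots,c_{k-6}$ and hence cannot reach them with a $2$-limited broadcast. Your contrapositive write-up, including the check that $n_0\ge k+3$ rules out cyclic wrap-around so that the column of the dominating transmitter genuinely lies in $\{c_{j-2},\dots,c_{j+2}\}\subseteq V(C)$, simply makes this explicit.
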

See Figure \ref{fig:domcont}. The region containing $V(C)$ is depicted by the thick black rectangle. The black circles with a black inner fill indicate vertices broadcasting at a non-zero strength. The thick red dotted lines indicate the broadcast ranges of the broadcasting vertices at their centers. In this example, there is one vertex broadcasting at strength 2 in columns $c_4$ and $c_{k-3}$ and one vertex broadcasting at strength 1 in column $c_{k-3}$. Let $DoesNotDominate(H_{m,k}, g)$ return true if $g$ does not dominate the vertices of columns $c_7, c_8, \dots, c_{k-6}$ and false otherwise. 
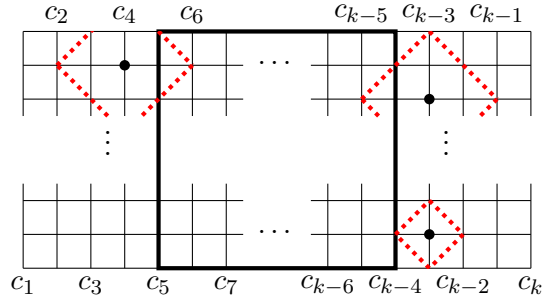
\begin{figure}[htbp]
\centering
\begin{tikzpicture}[scale = 0.42,
baseline={([yshift=-.5ex]current bounding box.center)},
vertex/.style = {circle, fill, inner sep=1.4pt, outer sep=0pt},
every edge quotes/.style = {auto=left, sloped, font=\scriptsize, inner sep=1pt}
]
\foreach \x in {2,4,6}{
\node at (\x-1,7.5) [label = center: $c_{\x}$] {};
}
\foreach \x in {1,3,5,7}{
\node at (\x-1,-0.5) [label = center: $c_{\x}$] {};
}
\node at (9,-0.5) [label = center: $c_{k-6}$] {};
\node at (11,-0.5) [label = center: $c_{k-4}$] {};
\node at (13,-0.5) [label = center: $c_{k-2}$] {};
\node at (15,-0.5) [label = center: $c_{k}$] {};
\node at (10,7.5) [label = center: $c_{k-5}$] {};
\node at (12,7.5) [label = center: $c_{k-3}$] {};
\node at (14,7.5) [label = center: $c_{k-1}$] {};
\node at (7.5,0.45) [label = : $\dots$] {};
\node at (7.5,5.45) [label = : $\dots$] {};
\node at (2.5,2.75) [label = : $\vdots$] {};
\node at (12.5,2.75) [label = : $\vdots$] {};
\draw[black, ultra thick] (4,0) rectangle (11,7);
\path
\foreach \x in {0,1,2,5,6,7}{
(0,\x) edge  (6.5,\x)
(8.5,\x) edge  (15,\x)
}
\foreach \y in {0,1,2,3,4,5,6,9,10,11,12,13,14,15}{
(\y,0) edge  (\y,2.5)
(\y,4.5) edge  (\y,7)
}
;
\node[vertex] at (12,1) {} ;
\node[vertex] at (3,6) {} ;
\node[vertex] at (12,5) {} ;
\clip (0,0) rectangle (16,7);
\path
(5, 6) edge [ultra thick, red, dotted] (3.5, 4.5)
(2.5, 4.5) edge [ultra thick, red, dotted] (1, 6)
(1, 6) edge [ultra thick, red, dotted] (3, 8)
(3, 8) edge [ultra thick, red, dotted] (5, 6)
(14, 5) edge [ultra thick, red, dotted] (13.5, 4.5)
(10.5, 4.5) edge [ultra thick, red, dotted] (10, 5)
(10, 5) edge [ultra thick, red, dotted] (12, 7)
(12, 7) edge [ultra thick, red, dotted] (14, 5)
(11, 1) edge [ultra thick, red, dotted] (12, 2)
(12, 2) edge [ultra thick, red, dotted] (13, 1)
(13, 1) edge [ultra thick, red, dotted] (12, 0)
(12, 0) edge [ultra thick, red, dotted] (11, 1)
;
\end{tikzpicture}
\caption{The graph $H_{m,k}$ with columns labelled $c_1, c_2, \dots, c_k$, $C$ indicated by the thick black rectangle, and possible broadcast vertices exterior to $C$ which can only dominate vertices in columns $c_5, c_6, c_{k-5},$ and $c_{k-4}$.}
\label{fig:domcont}
\end{figure}

\subsection{Forbidden Broadcasts}\label{sec:forbid}
To improve the speed of our computations, we have identified four simple forbidden broadcast structures. 
\begin{figure}[htbp]
\centering
\begin{tikzpicture}[scale = 0.42,
baseline={([yshift=-.5ex]current bounding box.center)},
vertex/.style = {circle, fill, inner sep=1.4pt, outer sep=0pt},
every edge quotes/.style = {auto=left, sloped, font=\scriptsize, inner sep=1pt}
]
\path
\foreach \x in {-2,-1,0,1,2}{
(4.5,\x) edge  (9.5,\x)
}
\foreach \x in {5,6,7,8,9}{
(\x,-2.5) edge  (\x,2.5)
};
\path
(9, 0) edge [ultra thick, dotted, red] (7, -2)
(7, -2) edge [ ultra thick, dotted,red] (5, 0)
(5, 0) edge [ultra thick, dotted,red] (7, 2)
(7, 2) edge [ultra thick, dotted,red] (9, 0)
(7, 0) edge [ultra thick, dotted,red] (8, 1)
(8, 1) edge [ultra thick, dotted,red] (9, 0)
(9, 0) edge [ultra thick, dotted,red] (8, -1)
(8, -1) edge [ultra thick, dotted,red] (7, 0)
;
\node[vertex] at (7,0) {} ;
\node[vertex] at (8,0) {} ;
\node at (7,-4) [label=$a)$]{};
\clip (4.5,-4) rectangle (9.5,2.5);
\end{tikzpicture}
\hspace{0.5cm}
\begin{tikzpicture}[scale = 0.42,
baseline={([yshift=-.5ex]current bounding box.center)},
vertex/.style = {circle, fill, inner sep=1.4pt, outer sep=0pt},
every edge quotes/.style = {auto=left, sloped, font=\scriptsize, inner sep=1pt}
]
\path
\foreach \x in {-2,-1,0,1,2}{
(4.5,\x) edge  (9.5,\x)
}
\foreach \x in {5,6,7,8,9}{
(\x,-2.5) edge  (\x,2.5)
};
\path
(5, 0) edge [ultra thick, dotted,red] (6, 1)
(6, 1) edge [ultra thick, dotted,red] (7, 0)
(7, 0) edge [ultra thick, dotted,red] (6, -1)
(6, -1) edge [ultra thick, dotted,red] (5, 0)
(7, 0) edge [ultra thick, dotted,red] (8, 1)
(8, 1) edge [ultra thick, dotted,red] (9, 0)
(9, 0) edge [ultra thick, dotted,red] (8, -1)
(8, -1) edge [ultra thick, dotted,red] (7, 0)
;
\node[vertex] at (6,0) {} ;
\node[vertex] at (8,0) {} ;
\node[vertex] at (7,0) {} ;
\node at (6.5,-2.15) [label=$w$]{};
\draw[->, thick] (6.6,-1.2) -- (6.9,-0.2);
\node at (7,-4) [label=$b)$]{};
\clip (4.5,-4) rectangle (9.5,2.5);
\end{tikzpicture}
\hspace{0.5cm}
\begin{tikzpicture}[scale = 0.42,
baseline={([yshift=-.5ex]current bounding box.center)},
vertex/.style = {circle, fill, inner sep=1.4pt, outer sep=0pt},
every edge quotes/.style = {auto=left, sloped, font=\scriptsize, inner sep=1pt}
]
\path
\foreach \x in {-2,-1,0,1,2}{
(4.5,\x) edge  (9.5,\x)
}
\foreach \x in {5,6,7,8,9}{
(\x,-2.5) edge  (\x,2.5)
};
\path
(6, 1) edge [ultra thick, dotted,red] (7, 2)
(7, 2) edge [ultra thick, dotted,red] (8, 1)
(8, 1) edge [ultra thick, dotted,red] (7, 0)
(7, 0) edge [ultra thick, dotted,red] (6, 1)
(7, 0) edge [ultra thick, dotted,red] (8, 1)
(8, 1) edge [ultra thick, dotted,red] (9, 0)
(9, 0) edge [ultra thick, dotted,red] (8, -1)
(8, -1) edge [ultra thick, dotted,red] (7, 0)
;
\node[vertex] at (7,1) {} ;
\node[vertex] at (8,0) {} ;
\node[vertex] at (7,0) {} ;
\node at (6.5,-2.15) [label=$w$]{};
\draw[->, thick] (6.5,-1.2) -- (6.9,-0.2);
\node at (7,-4) [label=$c)$]{};
\clip (4.5,-4) rectangle (9.5,2.5);
\end{tikzpicture}
\hspace{0.5cm}
\begin{tikzpicture}[scale = 0.42,
baseline={([yshift=-.5ex]current bounding box.center)},
vertex/.style = {circle, fill, inner sep=1.4pt, outer sep=0pt},
every edge quotes/.style = {auto=left, sloped, font=\scriptsize, inner sep=1pt}
]
\path
\foreach \x in {-2,-1,0,1,2}{
(4.5,\x) edge  (9.5,\x)
}
\foreach \x in {5,6,7,8,9}{
(\x,-2.5) edge  (\x,2.5)
};
\path
(6, 0) edge [ultra thick, dotted,red] (7, 1)
(7, 1) edge [ultra thick, dotted,red] (8, 0)
(8, 0) edge [ultra thick, dotted,red] (7, -1)
(7, -1) edge [ultra thick, dotted,red] (6, 0)

(7, 0) edge [ultra thick, dotted,red] (8, 1)
(8, 1) edge [ultra thick, dotted,red] (9, 0)
(9, 0) edge [ultra thick, dotted,red] (8, -1)
(8, -1) edge [ultra thick, dotted,red] (7, 0)
;
\node[vertex] at (8,0) {} ;
\node[vertex] at (7,0) {} ;
\node at (6.5,-2.15) [label=$w$]{};
\draw[->, thick] (6.5,-1.2) -- (6.9,-0.2);
\node at (7,-4) [label=$d)$]{};
\clip (4.5,-4) rectangle (9.5,2.5);
\end{tikzpicture}
\caption{Forbidden broadcasts.}
\label{fig:wendy}
\end{figure}
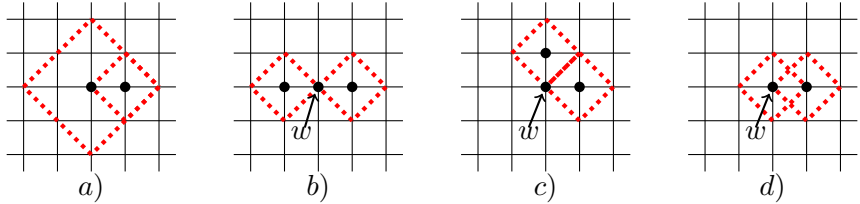
Without loss of generality, a sub-broadcast $g$ cannot contain any of the forbidden broadcasts shown in Figure \ref{fig:wendy} because the broadcast in $a)$ cannot be found in an optimal $2$-limited broadcast, and  the broadcasts in $b)$, $c)$, and $d)$ can be replaced by a broadcast of strength 2 from $w$ while preserving the cost of $g$ and extending the range of $g$. Let $ForbiddenBroadcast(H_{m,k}, g)$ return true if $g$ exhibits $a)$, $b)$, $c)$, or $d)$ and false otherwise.

\subsection{Optimality Requirement}\label{sec:backtrack:optimality}
As we are attempting to prove a lower bound for the cost of an optimal $2$-limited dominating broadcast on $G_{m,n_0}$, any possible sub-broadcast that is not optimal can be eliminated.

\begin{Observation}\label{observation3}
If the broadcast range $R$ of a possible sub-broadcast $g$ induced by $V(C)$ can be dominated by a broadcast $h$ on $H_{m,k}$ of cost strictly less than $cost(g)$, then $g$ cannot be a sub-broadcast of an optimal dominating broadcast.
\end{Observation}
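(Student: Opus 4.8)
The plan is to argue by contradiction with an exchange (swap) argument built from the operations $\oplus$ and $\ominus$. Suppose $f$ is an optimal $2$-limited dominating broadcast on $G_{m,n_0}$ whose sub-broadcast induced by $V(C)$ is exactly $g$, and let $h$ be a broadcast on $H_{m,k}$ that dominates $R$ with $cost(h) < cost(g)$. I identify $H_{m,k}$ with the subgraph of $G_{m,n_0}$ induced by the $k$ consecutive columns $c_1,\dots,c_k$; this identification is legitimate because $n_0 \geq k+3$, so these columns do not wrap around and $H_{m,k}$ is an induced subgraph of $G_{m,n_0}$. I then form the broadcast $f' = (f \ominus g) \oplus h$ on $G_{m,n_0}$ (treating $h$ as supported on the window) and show that $f'$ is a dominating broadcast of strictly smaller cost than $f$, contradicting optimality and hence forcing $g$ out of any optimal broadcast.

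That $f'$ is again a $2$-limited broadcast is immediate, since it is the pointwise maximum of two broadcasts taking values in $\{0,1,2\}$. For the cost, note that $f \ominus g$ deletes precisely the broadcasters of $f$ lying in $C$, so $cost(f \ominus g) = cost(f) - cost(g)$; and because $\max(a,b) \le a+b$ for nonnegative reals, $cost(f') \le cost(f \ominus g) + cost(h) = cost(f) - cost(g) + cost(h) < cost(f)$, using the hypothesis $cost(h) < cost(g)$.

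The substantive step is to verify that $f'$ dominates every vertex of $G_{m,n_0}$. I would partition the vertices dominated by $f$ according to whether a witnessing broadcaster lies inside $C$ or not. A vertex dominated by a broadcaster outside $C$ is dominated by $f \ominus g$, hence by $f' \ge f \ominus g$. A vertex $v$ dominated by a broadcaster $u \in C$ satisfies $g(u) = f(u) \le 2$, so $v$ lies within two columns of $u$ and therefore inside $H_{m,k}$; one checks that $d_{H_{m,k}}(u,v) = d_{G_{m,n_0}}(u,v)$, so $v$ hears $g$ in $H_{m,k}$, i.e. $v \in R$. By hypothesis $h$ dominates $R$, so some broadcaster $w$ of $h$ has $h(w) \ge d_{H_{m,k}}(w,v)$; since $H_{m,k}$ is a subgraph of $G_{m,n_0}$ we get $d_{G_{m,n_0}}(w,v) \le d_{H_{m,k}}(w,v) \le h(w) \le f'(w)$, so $v$ is dominated by $f'$ in $G_{m,n_0}$. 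As $f$ dominates all of $G_{m,n_0}$, so does $f'$, completing the contradiction.

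I expect the main obstacle to be the distance bookkeeping in the domination step: confirming that the four columns padded onto each side of $C$ make the window wide enough that the range $R$ computed inside $H_{m,k}$ coincides with the set of $G_{m,n_0}$-vertices dominated by $g$, and that every distance relevant to the argument either agrees between $H_{m,k}$ and $G_{m,n_0}$ or only decreases when passing to $G_{m,n_0}$. This is exactly where the structural assumptions $r \ge 5$, $k = r+8$, and $n_0 > r+10$ are used, and some care is needed to check that no shortest path of length at most two leaves the window or exploits the cyclic wraparound in the column direction (or, for $C_m \square C_n$, that the $C_m$ row structure — identical in $H_{m,k}$ and $G_{m,n_0}$ — keeps the short row distances in agreement).
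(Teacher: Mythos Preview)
Your proposal is correct. The paper states Observation~3 without proof, treating it as self-evident; your exchange argument $f' = (f\ominus g)\oplus h$ is exactly the natural formalization, and it mirrors the technique the paper does spell out in the proof of Lemma~\ref{thm:alg} (where the same $\ominus$/$\oplus$ machinery is used for a more elaborate swap).
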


Let $HasBroadcast(H_{m,k},R,x)$ return true if $R$ can be dominated with cost less than or equal to $x$ on $H_{m,k}$ and false otherwise.

\subsection{Proof by Induction}\label{sec:backtrack:actualmethods}
Recall that, after checking the appropriate base cases, we assume $B(n)$ is a lower bound of $\gamma_{b,2}(G_{m,n})$ for all $n < n_0$ where $n_0 \geq k+3$. Additionally, the values $r = k-8$ and $s$ are chosen such that, if the minimum cost of a sub-broadcast induced by $r$ consecutive columns of $G_{m,n_0}$ is strictly greater than $s$, then $B(n_0) \leq \gamma_{b,2}(G_{m,n_0})$. As such, we may be able to conclude (via the inductive assumption) that possible sub-broadcasts $g$ of cost less than or equal to $s$ induced by $r$ consecutive columns of $G_{m,n_0}$ imply the bound we hope to prove. This can be done by deleting $i \leq k$ columns and ``patching'' the graph back together to obtain a $2$-limited dominating broadcast  on $G_{m,n_0-i}$, the cost of which we assumed to be greater than or equal to $B(n_0-i)$.  In general, given some possible induced sub-broadcast $g$ whose broadcast range $R$ is contained within $k$ consecutive columns, we delete a particular selection of $i$ columns, for each $i$ from 1 to $k$. To this end, define 
\begin{equation*}
m_i = 
\begin{cases}
    \max_{n \geq n_0} \left\{ B(n) - B(n-i) \right\} & \textnormal{ should it exist and} \\
\infty & \textnormal{otherwise.} 
\end{cases}
\end{equation*}
Algorithm \ref{alg:indcheck} and Lemma \ref{thm:alg} formalize our approach. See Example \ref{eg_inductin_confusing} for an illustration of this test on $P_5 \square C_n$.

\begin{algorithm}[htbp]
\SetKwInOut{Input}{Input}
\SetKwInOut{Output}{Output}
\underline{function InductiveArgument} $(H_{m,k}, R, x, m_1, m_2,\dots, m_k)$\;
\Input{\sloppy A graph $H_{m,k} = ( P_m \textnormal{ or } C_m) \square P_k $ with columns labelled from left to right by $c_1, c_2, \dots, c_k$, and rows from 1 to $m$, a set of vertices $R \subseteq V(H_{m,k})$ labelled according to their row number and column label, the cost $x$  used to dominate $R$ by some $2$-limited broadcast $g$ whose broadcast range lies entirely within $H_{m,k}$, and $m_i$ (for each $i=1$ to $k$) as defined in Section \ref{sec:backtrack:actualmethods}.}
\Output{\sloppy Value of the truth statement: ``sub-broadcast implies bound.''}
Create a sorted list $L$ of the columns that contain at least one vertex of $R$ so that they are first ordered from maximum to minimum according to the number of vertices that are in $R$. Resolve ties by sorting so that $c_i$ comes before $c_j$ if $i < j$\;
\For{$i $ \textnormal{\textbf{from}}  $1$ \textnormal{\textbf{to}} $length(L)$}{
Let $S$ be the set of columns contained in the first $i$ entries of list $L$\;
Let $H_{m,k-|S|} = (P_m \textnormal{ or } C_m) \square P_{k-|S|}$\;
Set $R' = \left\{ v \in R: v \in H_{m,k-|S|}  \right\} $\;
\textbf{if} $HasBroadcast(H_{m,k-|S|},R',x-m_i)$ \textbf{then return} True\;
}
\Return False\;
\caption{Routine to determine if assumed sub-broadcast implies bound.}
\label{alg:indcheck}
\end{algorithm}

\begin{lem}\label{thm:alg}
    \sloppy Let $k \geq 13$, $B(n)$ be a lower bound of $\gamma_{b,2}(G_{m,n})$ for all $n < n_0$ for some $n_0\geq k+3$, and let $g$ be a broadcast of cost $x$ whose broadcast range $R$ is contained in some $k$-column induced subgraph $H_{m,k}$ of $G_{m,n_0}$. If $g$ is a sub-broadcast of an optimal broadcast on $G_{m,n_0}$ and $InductiveArgument(H_{m,k},$ $R,$ $x,$ $m_1,$ $m_2,$ $\dots,$ $m_k)$ is true, then $B(n_0) \leq \gamma_{b,2}(G_{m,n_0})$. Here $m_i$ (for each $i=1$ to $k$) is defined as in Section \ref{sec:backtrack:actualmethods}. 
\end{lem}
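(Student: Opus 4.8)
The plan is to convert the hypothetical optimal broadcast into a cheaper dominating broadcast on a shorter grid and then close with the inductive hypothesis and the definition of $m_i$. Let $f$ be the optimal broadcast of which $g$ is a sub-broadcast, so $cost(f)=\gamma_{b,2}(G_{m,n_0})$. Writing $f=g\oplus(f\ominus g)$, the two summands have disjoint supports, so $cost(f)=cost(g)+cost(f\ominus g)=x+cost(f\ominus g)$; moreover, since $R$ is the full broadcast range of $g$, every vertex of $V(G_{m,n_0})\setminus R$ is dominated by $f\ominus g$. Suppose $InductiveArgument$ returns True, say at iteration $i$, with $S$ the set of $i$ deleted columns and $h$ a broadcast on $H_{m,k-i}$ dominating $R'=R\cap V(H_{m,k-i})$ of cost at most $x-m_i$ (the existence of such an $h$ is exactly what $HasBroadcast(H_{m,k-i},R',x-m_i)$ asserts).

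Next I would form $G_{m,n_0-i}$ by deleting the columns of $S$ from $G_{m,n_0}$ and reconnecting the surviving columns in cyclic order, and define $f'=(f\ominus g)|_{G_{m,n_0-i}}\oplus h$. The cost bookkeeping is immediate: restricting $f\ominus g$ only discards broadcasters, and $h$ is added at cost at most $x-m_i$, so $cost(f')\le cost(f\ominus g)+(x-m_i)=cost(f)-m_i$. The crucial point is that $f'$ dominates $G_{m,n_0-i}$. I would first record the elementary geometric fact that deleting columns from $(P_m\textnormal{ or }C_m)\square C_{n_0}$ and patching cyclically never increases a distance: row distances are unchanged, and each cyclic column-arc can only shorten when columns are removed. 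Consequently $h$ still dominates $R'$ (distances in $G_{m,n_0-i}$ do not exceed those in the induced band $H_{m,k-i}$), and every surviving vertex of $V\setminus R$ that retains its $f\ominus g$-dominator remains dominated.

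The main obstacle is therefore to guarantee that no broadcaster of $f\ominus g$ needed to dominate a surviving vertex of $V\setminus R$ is destroyed when $S$ is deleted. This is where the four-column buffer built into $H_{m,k}$ and the strength bound $f\le 2$ enter: I would argue that a broadcaster dominating a vertex outside the columns carrying vertices of $R$ must itself lie outside those columns, since it reaches at most two columns away, whereas $S$ is drawn only from the list $L$ of columns that contain vertices of $R$. Pinning this down precisely — identifying exactly which columns can appear in $L$ and verifying that deleting them leaves the external domination of $f\ominus g$ intact after patching — is the delicate step and the one I expect to require the most care; the sorting of $L$ by $R$-count (deleting the columns most heavily covered by $g$) is what keeps the discarded columns inside the $R$-region and away from the broadcasters that service $V\setminus R$.

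Finally, with $f'$ a dominating broadcast of $G_{m,n_0-i}$ of cost at most $cost(f)-m_i$, I would close by induction. Since $i\ge 1$ we have $n_0-i<n_0$, so the hypothesis gives $B(n_0-i)\le\gamma_{b,2}(G_{m,n_0-i})\le cost(f)-m_i$. By definition $m_i\ge B(n_0)-B(n_0-i)$ (take $n=n_0$ in the maximum, which is finite precisely in the cases where the test can succeed, as $x-m_i\ge cost(h)\ge 0$ forces $m_i\le x$). Rearranging yields $cost(f)\ge B(n_0-i)+m_i\ge B(n_0)$, and since $cost(f)=\gamma_{b,2}(G_{m,n_0})$ this is the desired bound $B(n_0)\le\gamma_{b,2}(G_{m,n_0})$.
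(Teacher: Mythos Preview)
Your overall architecture matches the paper's: strip off $g$, delete the $i$ columns of $S$, patch in $h$ on $R'$, and finish with the inductive hypothesis and the defining inequality $m_i\ge B(n_0)-B(n_0-i)$. The cost bookkeeping and the final chain of inequalities are fine.

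The gap is in your domination argument for $f'=(f\ominus g)|_{G_{m,n_0-i}}\oplus h$. You try to argue that no broadcaster of $f\ominus g$ lies in a column of $S$, on the grounds that $S\subseteq L$ consists only of columns meeting $R$ and that a strength-$2$ broadcaster serving a vertex ``outside the $R$-columns'' must itself be outside them. This is not true. A surviving vertex $v\in V\setminus R$ can perfectly well sit in a column of $L$ (the column need only contain \emph{some} vertex of $R$), and even when $v$ lies in a column not in $L$, its dominator under $f\ominus g$ may be up to two columns away and hence inside $L$. Concretely, if $g$ broadcasts only from $(3,c_8)$ at strength $2$, then $c_7\in L$, yet nothing forbids $f$ from having a strength-$2$ broadcaster at $(1,c_7)$ that is outside the support of $g$ (the lemma does not assume $g$ is induced by all of $V(C)$; in the applications via \emph{NecessaryBroadcast} and \emph{AllSubcases} it is not). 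That broadcaster dominates $(1,c_5)\notin R$, and if $c_7\in S$ you have just destroyed its only cover. The sorting of $L$ is a heuristic for making $R'$ small; it does not guarantee that deleted columns avoid broadcasters of $f\ominus g$.

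The paper does not try to avoid this situation; it repairs it. When restricting $f'=f\ominus g$ to the surviving columns, for every broadcaster $u$ of $f'$ in a deleted column it picks a vertex $u'$ in the same row and in a nearest undeleted column and sets the strength there to $\max\{f'(u'),f'(u)\}$. This costs nothing extra, and a short two-case check (depending on whether $u'$ lies between $v$ and $u$, or $u$ lies between $v$ and $u'$) shows that after collapsing the deleted columns the distance from $u'$ to $v$ in $G_{m,n_0-|S|}$ is at most $d_{G_{m,n_0}}(u,v)$, so $v$ is still heard. Inserting this relocation step in place of your ``broadcasters are never deleted'' claim makes the argument go through.
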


\begin{proof}
\sloppy Assume the conditions of Lemma \ref{thm:alg}. Let $f$ be an optimal $2$-limited dominating broadcast of $G_{m,n_0}$ which contains $g$ as an induced sub-broadcast. Let $f' = f \ominus g$ and let  $S$, $H_{m,k-|S|}$, and $R'$ be defined as in Algorithm \ref{alg:indcheck}. Let $G_{m,n_0-|S|}$ be constructed by removing the set of columns $S$ from $G_{m,n_0}$ which resulted in $InductiveArgument$ returning true on line 7 and adding edges in the natural way such that the resulting graph is isomorphic to $ \left( P_m \textnormal{ or }C_m  \right)  \square C_{n_0- |S|} $.

\begin{Observation}\label{observation1}
As $n_0 \geq k+3$ and $\lvert S \rvert \leq k$, we have that  $n_0- \lvert S \rvert \geq 3$. Thus, $G_{m,n_0-\lvert S \rvert }$ is well-defined.
\end{Observation}

Let $f''$ be the broadcast formed by restricting $f'$ to $G_{m,n_0-|S|}$. That is, let $f''(v) = f'(v)$ for all $v \in V(G_{m,n_0-|S|})$. For each vertex $v \in V(G_{m,n_0}) \setminus V(G_{m,n_0-|S|})$ (i.e. all vertices $v$ in the set of columns $S$) broadcasting with non-zero strength under $f'$, pick a vertex $u$ in the same row as $v$ and in an undeleted column nearest to $v$ and let $f''(u) = \textnormal{ max}  \left\{ f'(u),f'(v) \right\} $. Note that $cost(f'') \leq cost(f')$.

\begin{Observation}\label{observation}
The broadcast $f''$ dominates $V(G_{m,n_0-|S|})$ with the possible exception of the vertices of $R'$.
\end{Observation}
\begin{proof}
Fix $v \in V(G_{m,n_0-|S|})\setminus R'$. As $v \not \in R'$, $v$ is not dominated by $g$ on $G_{m,n_0}$. There are two cases:

Case \Romannum{1}, $v$ hears a broadcast from a vertex $u$ under $f'$ and $u$ is not in the set of columns $S$. As $f''(u) \geq f'(u)$, since $v$ hears a broadcast from $u$ under $f'$ on $G_{m,n_0}$, $v$ hears a broadcast from $u$ under $f''$ on $G_{m,n_0-|S|}$. 

Case \Romannum{2}, $v$ hears a broadcast from a vertex $u$ under $f'$ and $u$ is in the set of columns $S$. As $u \not \in V(G_{m,n_0-|S|})$, there exists a vertex $u' \in V(G_{m,n_0-|S|})$ in the same row as $u$ and in a nearest column undeleted to $u$ such that $f''(u') \geq f'(u)$. Note that, by Observation \ref{observation1}, such a vertex exists. It suffices to check that $v$ hears a broadcast from $u'$ under $f''$ on $G_{m,n_0-|S|}$. Since $f'(u) \leq 2$ and $G_{m,n_0}= (P_m \textnormal{ or } C_m) \square C_{n_0}$, there are two subcases:

Subcase \Romannum{2}.a),  $u'$ is on the same column as $v$ or $u'$ is on the column between $v$ and $u$ on $G_{m,n_0}$. Since $u'$ is in the same row as $u$, $d(u',v) \leq d(u,v)$ in $G_{m,n_0}$. As $f''(u') \geq f'(u)$, since $v$ hears a broadcast from $u$ under $f'$ on $G_{m,n_0}$, $v$ hears a broadcast from $u'$ under $f''$ on $G_{m,n_0-|S|}$.

Subcase \Romannum{2}.b), $u$ is on a column between $v$ and $u'$ on $G_{m,n_0}$. As $u'$ is on a nearest column to $u$ that is undeleted, the column of $u'$ on $G_{m,n_0-|S|}$ is at distance at most $d_{G_{m,n_0}}(u,v)$ (the distance between $u$ and $v$ in $G_{m,n_0}$) from the column containing $v$. As $u'$ is in the same row as $u$ and $f''(u') \geq f'(u)$, since $v$ hears a broadcast from $u$ under $f'$ on $G_{m,n_0}$, $v$ hears a broadcast from $u'$ under $f''$ on $G_{m,n_0-|S|}$.
\end{proof}

By Observation \ref{observation}, to dominate $G_{m,n_0-|S|}$, it suffices to find a $2$-limited broadcast which dominates $R'$. As the function call on line 7 returns true, $R'$ can be dominated by a $2$-limited broadcast $h$ with cost less than or equal to $ x-m_{|S|}$ where $x = cost(g)$. Note that, for this to be true, $m_{|S|} \neq \infty$. Let $f'''= f'' \oplus h$. The broadcast $f'''$ is a $2$-limited dominating broadcast on $G_{m,n_0-|S|}$. By the inductive assumption, 
\begin{equation}\label{eqfirst}
B(n_0 - |S|) \leq \gamma_{b,2}(G_{m,n_0 - |S|}) \leq cost(f''').
\end{equation}
Observe that
\begin{equation}\label{eqsecond}
\begin{aligned}
    cost(f''') = cost(f''\oplus h) = cost(f'') + cost(h) &\leq cost(f') + cost(h)\\
                                   &\leq cost(f') + x - m_{|S|}\\
                                   &= cost(f\ominus g) + x-m_{|S|}\\
                                   &= cost(f) - cost(g) + x - m_{|S|}\\
                                   &= cost(f) - m_{|S|}. 
\end{aligned}
\end{equation}
As $m_{|S|} \geq B(n_0) - B(n_0-|S|)$, when combined with Equations \ref{eqfirst} and \ref{eqsecond}, we have that
\begin{equation*}
\begin{aligned}
    B(n_0) \leq B(n_0-|S|) + m_{|S|} \leq  cost(f''') + m_{|S|}\leq cost(f) = \gamma_{b,2}(G_{m,n_0})
\end{aligned}
\end{equation*}
as desired.

\end{proof}

\begin{eg}\label{eg_inductin_confusing}
Suppose we wish to establish $n \leq \gamma_{b,2} (P_5 \square C_n)$; doing so will yield periodically optimal values for $\gamma_{b,2} (P_5 \square C_n)$. By computation, we have that  $n \leq \gamma_{b,2} \left( P_{5} \square C_{n} \right) $ for $3 \leq n \leq 16$. Suppose the bound holds for all $n < n_0$ where $n_0 > 16$. Let $f$ be an optimal $2$-limited dominating broadcast of $P_5 \square C_{n_0}$ and let $C$ be the subgraph of $ P_{5} \square C_{n_0}$ induced by the vertices appearing in a minimum cost set of eight consecutive columns with respect to $f$. Suppose $V(C)$ induces the sub-broadcast $g$ shown in Figure \ref{fig:backtrack:inductionbasecontraintro}.
\begin{figure}[htbp]
\centering
\begin{tikzpicture}[scale = 0.42,
baseline={([yshift=-.5ex]current bounding box.center)},
vertex/.style = {circle, fill, inner sep=1.4pt, outer sep=0pt},
every edge quotes/.style = {auto=left, sloped, font=\scriptsize, inner sep=1pt}
]
\draw[black, ultra thick] (4,0) rectangle (11,4);
\node[vertex] at (7,0) {} ;
\node[vertex] at (9,2) {} ;
\node[vertex] at (6,3) {} ;
\node[vertex] at (8,4) {} ;
\node[vertex] at (4,1) {} ;
\node[vertex] at (11,1) {} ;
\path
\foreach \x in {0,1,2,3,4}{
(1.5,\x) edge  (13.5,\x)
}
\foreach \y in {2,3,4,5,6,7,8,9,10,11,12,13}{
(\y,0) edge  (\y,4)
}
;
\clip (1.5,0) rectangle (13.5,4);
\path
(9, 0) edge [ultra thick, dotted, red] (7, -2)
(7, -2) edge [ ultra thick, dotted,red] (5, 0)
(5, 0) edge [ultra thick, dotted,red] (7, 2)
(7, 2) edge [ultra thick, dotted,red] (9, 0)
(10, 2) edge [ ultra thick, dotted, red] (9, 1)
(9, 1) edge [ ultra thick, dotted,red] (8, 2)
(8, 2) edge [ultra thick, dotted,red] (9, 3)
(9, 3) edge [ultra thick, dotted,red] (10, 2)
(7, 3) edge [ultra thick, dotted,red] (6, 2)
(6, 2) edge [ultra thick, dotted,red] (5, 3)
(5, 3) edge [ultra thick, dotted,red] (6, 4)
(6, 4) edge [ultra thick, dotted,red] (7, 3)
(9, 4) edge [ultra thick, dotted,red] (8, 3)
(8, 3) edge [ultra thick, dotted,red] (7, 4)
(7, 4) edge [ultra thick, dotted,red] (8, 5)
(8, 5) edge [ultra thick, dotted,red] (9, 4)
(4, 0) edge [ultra thick, dotted,red] (5, 1)
(5, 1) edge [ultra thick, dotted,red] (4, 2)
(4, 2) edge [ultra thick, dotted,red] (3, 1)
(3, 1) edge [ultra thick, dotted,red] (4, 0)
(11, 0) edge [ultra thick, dotted,red] (12, 1)
(12, 1) edge [ultra thick, dotted,red] (11, 2)
(11, 2) edge [ultra thick, dotted,red] (10, 1)
(10, 1) edge [ultra thick, dotted,red] (11, 0)
;
\end{tikzpicture}
\caption{Assumed sub-broadcast $g$ induced by $V(C)$ of cost 7.}
\label{fig:backtrack:inductionbasecontraintro}
\end{figure}
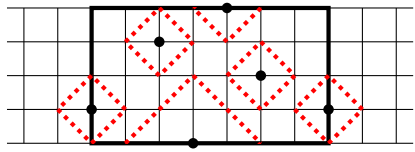

Let $f'=f\ominus g$ and let $R$ be the range of $g$. The broadcast $f'$ dominates $V(P_5 \square C_{n_0})$ with the possible exception of the vertices of $R$. Suppose we delete the four columns indicated in Figure \ref{fig:backtrack:inductionbasecontraintrosec} (Left) from the grid and add edges in the natural way such that the resulting graph $G_{5,n_0-4}= P_{5} \square C_{n_0-4} $. 
\begin{figure}[htbp]
\centering
\begin{tikzpicture}[scale = 0.42,
baseline={([yshift=-.5ex]current bounding box.center)},
vertex/.style = {circle, fill, inner sep=1.4pt, outer sep=0pt},
every edge quotes/.style = {auto=left, sloped, font=\scriptsize, inner sep=1pt}
]
\draw [decorate,decoration={brace,amplitude=5pt,raise=2pt},xshift=0pt]
(6,4) -- (9,4) node [black,midway,yshift=0.5cm] {columns to be deleted};
\draw[black, ultra thick] (4,0) rectangle (11,4);
\draw[ForestGreen, ultra thick] (5,0) circle  (10pt);
\draw[ForestGreen, ultra thick] (4,0) circle  (10pt);
\draw[ForestGreen, ultra thick] (4,1) circle  (10pt);
\draw[ForestGreen, ultra thick] (4,2) circle  (10pt);
\draw[ForestGreen, ultra thick] (3,1) circle  (10pt);
\draw[ForestGreen, ultra thick] (5,1) circle  (10pt);
\draw[ForestGreen, ultra thick] (5,3) circle  (10pt);
\draw[ForestGreen, ultra thick] (10,2) circle  (10pt);
\draw[ForestGreen, ultra thick] (11,2) circle  (10pt);
\draw[ForestGreen, ultra thick] (11,1) circle  (10pt);
\draw[ForestGreen, ultra thick] (11,0) circle  (10pt);
\draw[ForestGreen, ultra thick] (10,1) circle  (10pt);
\draw[ForestGreen, ultra thick] (12,1) circle  (10pt);

\begin{pgfonlayer}{background}
\clip (1.5,0) rectangle (13.5,4);
\node[2] at (7,0) {} ;
\node[1] at (9,2) {} ;
\node[1] at (6,3) {} ;
\node[1] at (8,4) {} ;
\node[1] at (11,1) {} ;
\node[1] at (4,1) {} ;
\end{pgfonlayer}

\draw[fill = white, opacity = 0.1] (6,0) rectangle (9,4);
\draw[thick, pattern=north east lines,pattern color=black] (6,0) rectangle (9,4);
\path
\foreach \x in {0,1,2,3,4}{
(1.5,\x) edge  (13.5,\x)
}
\foreach \y in {2,3,4,5,6,7,8,9,10,11,12,13}{
(\y,0) edge  (\y,4)
}
;
\clip (1.5,0) rectangle (13.5,4);

\end{tikzpicture}
\hspace{0.25cm}
$\rightarrow$
\begin{tikzpicture}[scale = 0.42,
baseline={([yshift=-.5ex]current bounding box.center)},
vertex/.style = {circle, fill, inner sep=1.4pt, outer sep=0pt},
every edge quotes/.style = {auto=left, sloped, font=\scriptsize, inner sep=1pt}
]
\draw [decorate,decoration={brace,amplitude=5pt,raise=2pt},xshift=0pt]
(4,4) -- (7,4) node [black,midway,yshift=0.5cm] {resulting region after deletion};
\draw[ForestGreen, ultra thick] (5,0) circle  (10pt);
\draw[ForestGreen, ultra thick] (4,0) circle  (10pt);
\draw[ForestGreen, ultra thick] (4,1) circle  (10pt);
\draw[ForestGreen, ultra thick] (4,2) circle  (10pt);
\draw[ForestGreen, ultra thick] (3,1) circle  (10pt);
\draw[ForestGreen, ultra thick] (5,1) circle  (10pt);
\draw[ForestGreen, ultra thick] (5,3) circle  (10pt);
\draw[ForestGreen, ultra thick] (6,2) circle  (10pt);
\draw[ForestGreen, ultra thick] (7,2) circle  (10pt);
\draw[ForestGreen, ultra thick] (7,1) circle  (10pt);
\draw[ForestGreen, ultra thick] (7,0) circle  (10pt);
\draw[ForestGreen, ultra thick] (6,1) circle  (10pt);
\draw[ForestGreen, ultra thick] (8,1) circle  (10pt);
\path
\foreach \x in {0,1,2,3,4}{
(1.5,\x) edge  (9.5,\x)
}
\foreach \y in {2,3,4,5,6,7,8,9}{
(\y,0) edge  (\y,4)
}
;
\end{tikzpicture}
\hspace{0.25cm}
\begin{tikzpicture}[scale = 0.42,
baseline={([yshift=-.5ex]current bounding box.center)},
vertex/.style = {circle, fill, inner sep=1.4pt, outer sep=0pt},
every edge quotes/.style = {auto=left, sloped, font=\scriptsize, inner sep=1pt}
]
\phantom{\draw [decorate,decoration={brace,amplitude=5pt,raise=2pt},xshift=0pt]
    (4,4) -- (7,4) node [black,midway,yshift=0.5cm] {};}

\path
\foreach \x in {0,1,2,3,4}{
(1.5,\x) edge  (9.5,\x)
}
\foreach \y in {2,3,4,5,6,7,8,9}{
(\y,0) edge  (\y,4)
};
\clip (1.5,0) rectangle (9.5,4);
\path
(3,1) edge [ultra thick, dotted, red] (5, 3)
(5, 3) edge [ ultra thick, dotted,red] (7, 1)
(7, 1) edge [ultra thick, dotted,red] (5, -1)
(5, -1) edge [ultra thick, dotted,red] (3, 1)
(6,1) edge [ultra thick, dotted, red] (7, 2)
(7, 2) edge [ ultra thick, dotted,red] (8, 1)
(8, 1) edge [ultra thick, dotted,red] (7, 0)
(7, 0) edge [ultra thick, dotted,red] (6, 1)
;
\node[vertex] at (5,1) {} ;
\node[vertex] at (7,1) {} ;
\end{tikzpicture}
\caption{(Left \& Middle) Procedure which reduces $ G_{m,n_0} $ to $ G_{m,n_0-4} $. (Right) Broadcast of cost 3 which dominates $R'$.}
\label{fig:backtrack:inductionbasecontraintrosec}
\end{figure}
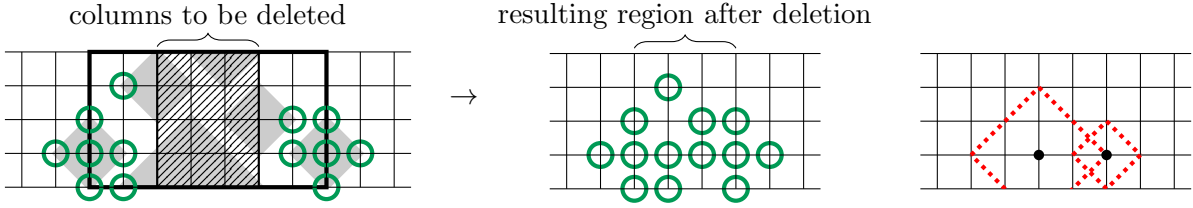
Let $f''$ be the broadcast formed by restricting $f'$ to $G_{m,n_0-4}$. That is, let $f''(v) = f'(v)$ for all $v \in V(G_{m,n_0-4})$. Let $R' = \left\{ v \in V(G_{m,n_0-4}) : v \in R  \right\} $. The vertices of $R'$ are indicated by the green circles in Figures \ref{fig:backtrack:inductionbasecontraintrosec} (Middle). The broadcast $f''$ dominates $V(G_{m,n_0-4})$ with the possible exception of the vertices of $R'$. However, $R'$ can be dominated by the broadcast $h$ of cost 3 as shown in Figure \ref{fig:backtrack:inductionbasecontraintrosec} (Right). Let $f''' = f'' \oplus h$. The broadcast $f'''$ is a $2$-limited dominating broadcast on $G_{5,n_0-4} = P_5 \square C_{n_0-4}$. By assumption, $n_0-4 \leq \gamma_{b,2}(P_5 \square C_{n_0-4}) \leq cost(f''')$. As
\begin{equation*}
\begin{aligned}
cost(f''') \leq cost(f'') +3 = cost(f\ominus g) + 3  \leq cost(f) - 4,
\end{aligned}
\end{equation*}
we conclude that $n_0 \leq cost(f) = \gamma_{b,2}(P_5 \square C_{n_0})$ as desired.
\end{eg}

\subsection{Necessary Broadcasts}\label{subsec:necbroad}
Given some possible sub-broadcast $g$, the broadcast structure of $g$ may imply the existence of a larger sub-broadcast $g'$. If this broadcast $g'$ does not pass the optimality requirement (see Section \ref{sec:backtrack:optimality}), then $g$ can be eliminated. If this broadcast $g'$ allows for the induction argument (see Section \ref{sec:backtrack:actualmethods}), then $g$ implies the bound we hope to prove.

\begin{Observation} 
    As $g$ is induced by $V(C)$, for each vertex in column $c_6$ or $c_{k-5}$ not dominated by $g$, any dominating broadcast of $G_{m,n_0}$ must have a vertex broadcasting at strength 2, in the same row and in column $c_4$ or $c_{k-3}$.  
\end{Observation}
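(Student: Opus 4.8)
The plan is to reason forward about which vertices of a dominating broadcast can reach an undominated vertex, using only the Cartesian-product distance formula and the standing bounds $k \geq 13$, $n_0 \geq k+3$. Let $f$ be a dominating broadcast of $G_{m,n_0}$ whose sub-broadcast induced by $V(C)$ is $g$, and fix a vertex $v$ in column $c_6$ that is not dominated by $g$; the case of $c_{k-5}$ is symmetric, obtained by applying the reflection $c_i \mapsto c_{k+1-i}$, which sends $c_6$ to $c_{k-5}$ and $c_4$ to $c_{k-3}$. Since $g$ is induced by $V(C)$, we have $g(w) = f(w)$ for every vertex $w$ in columns $c_5, c_6, \dots, c_{k-4}$. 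Hence if any such $w$ satisfied $f(w) \geq d(w,v)$, then $g$ would already dominate $v$, contrary to assumption. So no vertex of $V(C)$ dominates $v$ under $f$.

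Because $f$ is dominating, some vertex $u$ satisfies $f(u) \geq d(u,v)$, and by the previous step $u \notin V(C)$; that is, $u$ lies in one of the columns $c_1, c_2, c_3, c_4$ or $c_{k-3}, \dots, c_k$. The next step is to rule out every column except $c_4$. As $f$ is $2$-limited, $f(u) \leq 2$, so $d(u,v) \leq 2$. In $G_{m,n_0}$ the distance decomposes as a row distance (in $P_m$ or $C_m$) plus a column distance (in $C_{n_0}$). I would check that the column distance from $c_6$ to each of $c_1, c_2, c_3$ is at least $3$, and that the column distance from $c_6$ to every right-hand column $c_{k-3}, \dots, c_k$, including the wrap-around value in $C_{n_0}$, exceeds $2$: the direct distance to $c_{k-3}$ is $k-9 \geq 4$ and the wrap-around distance is $n_0-(k-9) \geq 12$. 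Consequently none of these columns can host $u$, and only $c_4$ remains, whose column distance to $c_6$ is exactly $2$.

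Finally, with $u$ in column $c_4$ I would extract the remaining conclusions from $d(u,v) \leq 2$. Since the column distance already equals $2$, the row distance must be $0$, forcing $u$ into the same row as $v$; and then $d(u,v) = 2$ together with $f(u) \leq 2$ forces $f(u) = 2$. This yields exactly the asserted vertex broadcasting at strength $2$, in the same row as $v$, in column $c_4$ (or $c_{k-3}$ in the mirrored case).

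The only delicate point is the distance bookkeeping in the cyclic column direction: one must confirm that the wrap-around distances in $C_{n_0}$ are also greater than $2$, so that no faraway column sneaks within distance $2$ of $c_6$ through the cycle. This is precisely where $k \geq 13$ and $n_0 \geq k+3$ are used, since together they guarantee that the $k$ labelled columns are genuinely consecutive and well separated from one another around $C_{n_0}$. Everything else is a routine consequence of the product-distance formula and the $2$-limited constraint.
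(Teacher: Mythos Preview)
Your argument is correct, and the paper does not give a proof at all: it states the result as an Observation and leaves it as immediate from the geometry (compare Figures~\ref{fig:domcont} and~\ref{fig:backtrack:notuniquefailedmaxregion}). Your write-up supplies exactly the routine distance computation the paper omits.

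One small expositional slip: from $u \notin V(C)$ you conclude that $u$ lies in one of the labelled columns $c_1,\dots,c_4$ or $c_{k-3},\dots,c_k$, but $G_{m,n_0}$ has $n_0 \geq k+3$ columns, so $u$ could also sit in one of the $n_0 - k \geq 3$ unlabelled columns outside $H_{m,k}$. Your final paragraph on wrap-around distances already covers these (the nearest unlabelled column to $c_6$, namely the one adjacent to $c_1$ around the cycle, is at column-distance $6$), so the logic is intact; just broaden the sentence listing the possible columns for $u$.
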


\sloppy See Figure \ref{fig:backtrack:notuniquefailedmaxregion} (Left). Let $g'$ be the broadcast constructed from $g$ by adding these necessary broadcasts. Let $R'$ be the broadcast range of $g'$. Let $NecessaryBroadcast(H_{m,k},$ $g,$ $m_1,$ $m_2,$ $\dots,$ $m_k)$ return true if either $HasBroadcast(H_{m,k},$ $R',$ $cost(g')-1)$ or $InductiveArgument (H_{m,k},$ $R',$ $cost(g'),$ $m_1,$ $m_2,$ $\dots,$ $m_k)$ returns true and false otherwise.

\begin{figure}[htbp]
\centering
\begin{tikzpicture}[scale = 0.42,
baseline={([yshift=-.5ex]current bounding box.center)},
vertex/.style = {circle, fill, inner sep=1.4pt, outer sep=0pt},
every edge quotes/.style = {auto=left, sloped, font=\scriptsize, inner sep=1pt}
]
\foreach \x in {2,4,6}{
\node at (\x-1,7.5) [label = center: $c_{\x}$] {};
}
\foreach \x in {1,3,5,7}{
\node at (\x-1,-0.5) [label = center: $c_{\x}$] {};
}
\node at (9,-0.5) [label = center: $c_{k-6}$] {};
\node at (11,-0.5) [label = center: $c_{k-4}$] {};
\node at (13,-0.5) [label = center: $c_{k-2}$] {};
\node at (15,-0.5) [label = center: $c_{k}$] {};
\node at (10,7.5) [label = center: $c_{k-5}$] {};
\node at (12,7.5) [label = center: $c_{k-3}$] {};
\node at (14,7.5) [label = center: $c_{k-1}$] {};

\node at (7.5,0.45) [label = : $\dots$] {};
\node at (7.5,5.45) [label = : $\dots$] {};
\node at (2.5,2.75) [label = : $\vdots$] {};
\node at (12.5,2.75) [label = : $\vdots$] {};
\draw[BurntOrange, ultra thick] (5,0) circle  (10pt);
\draw[BurntOrange, ultra thick] (5,6) circle  (10pt);
\draw[BurntOrange, ultra thick] (10,5) circle  (10pt);
\draw[black, ultra thick] (4,0) rectangle (11,7);
\path
\foreach \x in {0,1,2,5,6,7}{
(0,\x) edge  (6.5,\x)
(8.5,\x) edge  (15,\x)
}
\foreach \y in {0,1,2,3,4,5,6,9,10,11,12,13,14,15}{
(\y,0) edge  (\y,2.5)
(\y,4.5) edge  (\y,7)
}
;
\node[vertex] at (3,0) {} ;
\node[vertex] at (3,6) {} ;
\node[vertex] at (12,5) {} ;

\clip (0,0) rectangle (16,7);
\path
(5, 0) edge [ultra thick, red, dotted] (3, -2)
(3, -2) edge [ultra thick, red, dotted] (1, 0)
(1, 0) edge [ultra thick, red, dotted] (3, 2)
(3, 2) edge [ultra thick, red, dotted] (5, 0)
(5, 6) edge [ultra thick, red, dotted] (3.5, 4.5)
(2.5, 4.5) edge [ultra thick, red, dotted] (1, 6)
(1, 6) edge [ultra thick, red, dotted] (3, 8)
(3, 8) edge [ultra thick, red, dotted] (5, 6)
(14, 5) edge [ultra thick, red, dotted] (13.5, 4.5)
(10.5, 4.5) edge [ultra thick, red, dotted] (10, 5)
(10, 5) edge [ultra thick, red, dotted] (12, 7)
(12, 7) edge [ultra thick, red, dotted] (14, 5)
;
\end{tikzpicture}
\hspace{0.5cm}
\begin{tikzpicture}[scale = 0.42,
baseline={([yshift=-.5ex]current bounding box.center)},
vertex/.style = {circle, fill, inner sep=1.4pt, outer sep=0pt},
every edge quotes/.style = {auto=left, sloped, font=\scriptsize, inner sep=1pt}
]
\foreach \x in {2,4,6}{
\node at (\x-1,7.5) [label = center: $c_{\x}$] {};
}
\foreach \x in {1,3,5,7}{
\node at (\x-1,-0.5) [label = center: $c_{\x}$] {};
}
\node at (9,-0.5) [label = center: $c_{k-6}$] {};
\node at (11,-0.5) [label = center: $c_{k-4}$] {};
\node at (13,-0.5) [label = center: $c_{k-2}$] {};
\node at (15,-0.5) [label = center: $c_{k}$] {};
\node at (10,7.5) [label = center: $c_{k-5}$] {};
\node at (12,7.5) [label = center: $c_{k-3}$] {};
\node at (14,7.5) [label = center: $c_{k-1}$] {};

\node at (7.5,0.45) [label = : $\dots$] {};
\node at (7.5,5.45) [label = : $\dots$] {};
\node at (2.5,2.75) [label = : $\vdots$] {};
\node at (12.5,2.75) [label = : $\vdots$] {};
\draw[BurntOrange, ultra thick] (5,0) circle  (10pt);
\draw[BurntOrange, ultra thick] (5,6) circle  (10pt);
\draw[BurntOrange, ultra thick] (10,5) circle  (10pt);
\draw[ForestGreen, ultra thick] (11,1) circle  (10pt);
\draw[black, ultra thick] (4,0) rectangle (11,7);
\path
\foreach \x in {0,1,2,5,6,7}{
(0,\x) edge  (6.5,\x)
(8.5,\x) edge  (15,\x)
}
\foreach \y in {0,1,2,3,4,5,6,9,10,11,12,13,14,15}{
(\y,0) edge  (\y,2.5)
(\y,4.5) edge  (\y,7)
}
;

\node[vertex] at (3,0) {} ;
\node[vertex] at (3,6) {} ;
\node[vertex] at (12,5) {} ;
\node[vertex] at (12,1) {} ;

\clip (0,0) rectangle (16,7);
\path
(5, 0) edge [ultra thick, red, dotted] (3, -2)
(3, -2) edge [ultra thick, red, dotted] (1, 0)
(1, 0) edge [ultra thick, red, dotted] (3, 2)
(3, 2) edge [ultra thick, red, dotted] (5, 0)
(5, 6) edge [ultra thick, red, dotted] (3.5, 4.5)
(2.5, 4.5) edge [ultra thick, red, dotted] (1, 6)
(1, 6) edge [ultra thick, red, dotted] (3, 8)
(3, 8) edge [ultra thick, red, dotted] (5, 6)
(14, 5) edge [ultra thick, red, dotted] (13.5, 4.5)
(10.5, 4.5) edge [ultra thick, red, dotted] (10, 5)
(10, 5) edge [ultra thick, red, dotted] (12, 7)
(12, 7) edge [ultra thick, red, dotted] (14, 5)
(11, 1) edge [ultra thick, red, dotted] (12, 2)
(12, 2) edge [ultra thick, red, dotted] (13, 1)
(13, 1) edge [ultra thick, red, dotted] (12, 0)
(12, 0) edge [ultra thick, red, dotted] (11, 1)
;
\end{tikzpicture}
\caption{The graph $H_{m,k}$ with columns labelled $c_1, c_2, \dots, c_k$ and $C$ indicated by the thick black rectangle. (Left) Resulting necessary broadcasts of strength 2 in columns $c_4$ and $c_{k-3}$ forced by vertices undominated in columns $c_6$ and $c_{k-5}$. (Right) Necessary broadcast $g'$ as described in Section \ref{subsec:necbroad}, the vertex undominated by $g'$ indicated by the green circle, and one of the five possible sub-broadcasts $g''$ which extend $g'$ to dominate the vertex undominated by $g'$ in column $c_{k-4}$.}
\label{fig:backtrack:notuniquefailedmaxregion}
\end{figure}
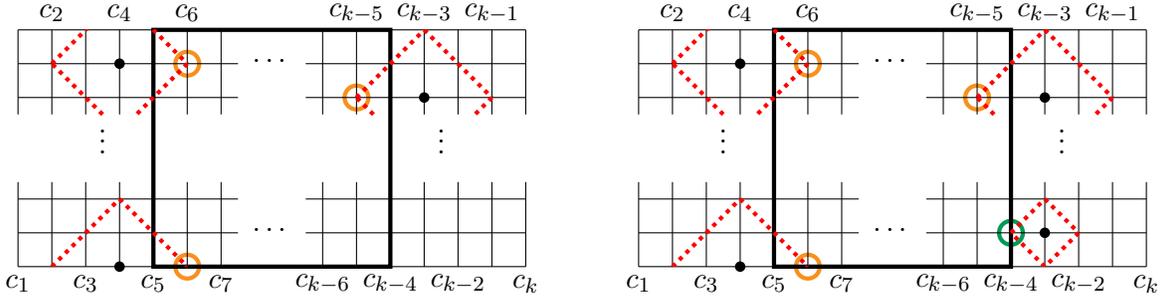

\subsection{Considering All Possible Sub-Cases}\label{lastsec}
When considering all possible sub-broadcasts $g$ and the necessary sub-broadcasts $g'$ they imply (see Section \ref{subsec:necbroad}), some sub-broadcasts may require that we consider all possible induced sub-broadcasts $g''$ which extend $g'$ to dominate $V(C)$.

Let $g'$ be the necessary broadcast implied by $g$ as described in Section \ref{subsec:necbroad}. Note $g'$ dominates $V(C)$ with the possible exception of vertices in columns $c_5$ and $c_{k-4}$. There are many possible ways a dominating $2$-limited broadcast could dominate the vertices in columns $c_5$ and $c_{k-4}$ undominated by $g'$. Let $ \mathcal{C}'$ be the collection of broadcasts formed by extending $g'$ to include vertices from $V(G_{m,k})\setminus \left[ V(c_5) \cup V(c_6) \cup\dots \cup V(c_{k-4}) \right] $ broadcasting at strength 0, 1, or strength 2 until every vertex in columns $c_5$ and $c_{k-4}$ which do not hear a broadcast under $g'$ is dominated. See Figure \ref{fig:backtrack:notuniquefailedmaxregion} (Right). Let $AllSubcases(H_{m,k},$ $g,$ $m_1,$ $m_2,$ $\dots,$ $m_k)$ return true if either $HasBroadcast(H_{m,k},$ $R',$ $cost(g')-1)$ or $InductiveArgument (H_{m,k},$ $R'',$ $cost(g''),$ $m_1,$ $m_2,$ $\dots,$ $m_k)$ returns true for every $g'' \in \mathcal{C'}$ (where $R''$ is the broadcast range of $g''$) and false otherwise.

Without loss of generality, we assume that, for each broadcast $g'' \in \mathcal{C}'$, $ForbiddenBroadcast(G_{m,k},$ $g'')$ is false. Additionally, for each $g'' \in \mathcal{C}'$, if the vertices $u$ and $v$ are added to the broadcast to dominate a vertex in column $c_5$, we may assume that the set of vertices dominated in column $c_5$ by $u$ is not a subset of the set of vertices in column $c_5$ dominated by $v$, else $u$ is redundant in terms of dominating the vertices of column $c_5$. The same argument applies to the vertices dominating the vertices in column $c_{k-4}$. 

\subsection{Algorithm to Prove Lower Bounds}\label{sec:middle}
Algorithm \ref{alg:thefinalfinal}: \textit{ProvedLowerBound} implements the battery of tests described in Section \ref{middlesec} through \ref{lastsec} to prove lower bounds for $C_m \square C_n$ and $P_m \square C_n$. The correctness of Algorithm \ref{alg:thefinalfinal} is proven by Theorem \ref{prop:lowerbounddone}.

\begin{algorithm}[htbp]
\SetKwInOut{Input}{Input}
\SetKwInOut{Output}{Output}
\underline{function ProvedLowerBound} $(H_{m,k},s,t,m_1, m_2,\dots, m_k)$\;
\Input{A graph $H_{m,k} = ( P_m \textnormal{ or } C_m) \square P_k $, where $k \geq 13$,  with columns labelled from left to right by $c_1, c_2,\dots, c_k$, the minimum $s$ and maximum $t$ possible costs of a sub-broadcast $g$ of $H_{m,k}$ induced by columns $c_5, c_6, \dots, c_{k-4}$, and $m_i$ (for each $i=1$ to $k$) as defined in Section \ref{sec:backtrack:actualmethods}.}
\Output{Value of the truth statement: ``lower bound proven.''}
Let $ \mathcal{C}$ be all  possible sub-broadcasts $g$  of costs $s$ to $t$ induced by the vertices of columns $c_5, c_6, \dots, c_{k-4}$\;
\ForEach{$g \in \mathcal{C}$}{
\textbf{if} $DoesNotDominate(H_{m,k},g)$ \textbf{then} \textbf{goto} line 12\;
\textbf{if} $ForbiddenBroadcast(H_{m,k},g)$ \textbf{then} \textbf{goto} line 12\;
Let $R$ be the broadcast range of $g$\;
\textbf{if} $HasBroadcast(H_{m,k}, R, cost(g)-1)$ \textbf{then} \textbf{goto} line 12\;
\textbf{if} $InductiveArgument(H_{m,k}, R, cost(g),m_1, m_2,\dots, m_k)$ \textbf{then} \textbf{goto} line 12\;
\textbf{if} $NecessaryBroadcast(H_{m,k},g,m_1,m_2,\dots,m_k)$ \textbf{then} \textbf{goto} line 12\;
\textbf{if} $AllSubcases(H_{m,k},g,m_1,m_2,\dots,m_k)$ \textbf{then} \textbf{goto} line 12\;
\Return False\;
\
}
\Return True\;
\caption{Routine to prove lower bound.}
\label{alg:thefinalfinal}
\end{algorithm}

\begin{thm}\label{prop:lowerbounddone}
    \sloppy Let $k \geq 13$ and $B(n)$ be a lower bound of $\gamma_{b,2}(G_{m,n})$ for all $n < n_0$ for some $n_0\geq k+3$.  Let $s = \gamma_{b,2}(H_{m,k-12})$ and $t=max \left\{\ell: \exists n \ge n_0 \textnormal{ s.t. } \frac{n\ell}{k-8} < B(n) \right\} $. If $ProvedLowerBound(H_{m,k},$ $s,$ $t,$ $m_1,$ $m_2,$ $\dots,$ $m_k)$ is true, then $B(n) \leq \gamma_{b,2}(G_{m,n})$ for all $n$. Here $m_i$ (for each $i=1$ to $k$) is defined as in Section \ref{sec:backtrack:actualmethods}.
\end{thm}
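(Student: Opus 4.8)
The plan is to prove the statement by strong induction on $n$, with the hypothesis supplying the base cases (all $n<n_0$) and the substance being a single inductive step argued uniformly for every $N\ge n_0$. Fix such an $N$ and an optimal $2$-limited dominating broadcast $f$ of $G_{m,N}$; by the reductions of Section \ref{sec:forbid} I may assume $f$ contains none of the forbidden broadcasts $a)$--$d)$, so neither does any restriction of $f$. Among the $N$ cyclic windows of $r=k-8$ consecutive columns, choose one of minimum induced cost, let $C$ be the corresponding subgraph, let $H_{m,k}$ be the $k$-column subgraph obtained by adjoining the four flanking columns on each side (well defined since $N\ge k+3>k$), and set $g^\ast=f|_{V(C)}$ and $\mu=cost(g^\ast)$. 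The goal is to show $cost(f)\ge B(N)$.

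First I would record two facts about $\mu$. Since each column lies in exactly $r$ of the $N$ windows, $\sum_{\text{windows}}cost=r\cdot cost(f)$, so $\mu\le r\,cost(f)/N$, i.e. $cost(f)\ge N\mu/(k-8)$. In the large-window case $\mu>t$ this gives $cost(f)\ge N(t+1)/(k-8)\ge B(N)$, where the last inequality holds because $t+1$ violates the defining condition of $t$ at $n=N$; this settles that case. For the lower bound $\mu\ge s$ I would invoke Observation \ref{observation2}: as $g^\ast\subseteq f$ is dominating, $g^\ast$ dominates columns $c_7,\dots,c_{k-6}$, which induce a copy of $H_{m,k-12}$. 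Pushing each broadcasting vertex of $g^\ast$ lying outside those columns inward to the nearest one, exactly as in Observation \ref{observation}, yields a dominating broadcast of $H_{m,k-12}$ of cost at most $\mu$, whence $\mu\ge\gamma_{b,2}(H_{m,k-12})=s$.

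It then remains to treat $s\le\mu\le t$, in which $g^\ast$ is one of the sub-broadcasts enumerated by \textit{ProvedLowerBound}. Since the routine returned \textbf{True}, $g^\ast$ never reaches the \texttt{return False} line, so at least one of the six tests fires on $g^\ast$. The heart of the argument is to check that every test that can fire forces $B(N)\le cost(f)$. I would first rule out the first three tests for $g^\ast$: $DoesNotDominate$ is false because $g^\ast$ dominates $c_7,\dots,c_{k-6}$; $ForbiddenBroadcast$ is false by our choice of $f$; and $HasBroadcast(H_{m,k},R,cost(g^\ast)-1)$ is false because, were it true, replacing the range of $g^\ast$ in $f$ by the cheaper broadcast (form $f\ominus g^\ast$ then $\oplus$ the cheaper broadcast, as in Lemma \ref{thm:alg}) would dominate $G_{m,N}$ more cheaply than $f$, contradicting optimality. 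Hence one of $InductiveArgument$, $NecessaryBroadcast$, $AllSubcases$ fires. For $InductiveArgument$ the conclusion is immediate from Lemma \ref{thm:alg}. For the other two I would first argue that the enlarged broadcast $g'$ (and, for $AllSubcases$, the relevant extension $g''\in\mathcal{C}'$) is genuinely an induced sub-broadcast of $f$: the strength-$2$ broadcasts added in columns $c_4,c_{k-3}$ are forced by the Observation of Section \ref{subsec:necbroad}, and one member of $\mathcal{C}'$ records how $f$ dominates the residual vertices of columns $c_5,c_{k-4}$. Given this, the $HasBroadcast$ disjunct again contradicts optimality of $f$, so the $InductiveArgument$ disjunct must hold, and Lemma \ref{thm:alg} applied to $g'$ (resp.\ $g''$) gives $B(N)\le cost(f)$.

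Finally, to pass from this single step to ``for all $n$'' I would note that $t$ and the $m_i$ are built with quantifiers ranging over all $n\ge n_0$, precisely so that one run suffices for every $N\ge n_0$: the inequality $N(t+1)/(k-8)\ge B(N)$ needed above holds for every such $N$ because $t+1$ fails the defining condition of $t$ at every $n\ge n_0$, while $m_i\ge B(N)-B(N-i)$ for every $N\ge n_0$ by the maximum in its definition, which is exactly what Lemma \ref{thm:alg} consumes. Thus the successful run of \textit{ProvedLowerBound} validates the inductive step simultaneously for all $N\ge n_0$, and strong induction (base cases $n<n_0$ from the hypothesis) yields $B(n)\le\gamma_{b,2}(G_{m,n})$ for all $n$. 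I expect the main obstacle to be the bookkeeping in the $NecessaryBroadcast$ and $AllSubcases$ cases: namely, verifying that $g'$ and some $g''\in\mathcal{C}'$ really are induced sub-broadcasts of the fixed optimal $f$, so that Observation \ref{observation3} and Lemma \ref{thm:alg} apply, together with confirming that $\mathcal{C}'$ exhausts all ways $f$ can dominate the leftover vertices of columns $c_5$ and $c_{k-4}$.
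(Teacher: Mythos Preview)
Your proposal is correct and follows essentially the same approach as the paper: pick a minimum-cost window of $r=k-8$ columns, use averaging to dispose of $cost(g)>t$, observe $cost(g)\ge s$ since $g$ must dominate $H_{m,k-12}$, and then argue that whichever of the six tests fires on $g$ either contradicts optimality/domination/forbidden-freeness of $f$ or yields $B(N)\le cost(f)$ via Lemma~\ref{thm:alg}. The paper compresses your entire third paragraph into the single sentence ``From the results in Sections~\ref{middlesec} through~\ref{lastsec}, this proves the claim,'' whereas you spell out why the first three tests cannot be the ones that fire and why the last three deliver the bound; you are also more explicit than the paper about the pushing argument for $cost(g)\ge s$ and about the uniformity in $N\ge n_0$ (that the fixed $t$ and $m_i$ work for every step of the induction). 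The bookkeeping worry you flag for $NecessaryBroadcast$ and $AllSubcases$ is real but benign: what Lemma~\ref{thm:alg} actually consumes is that $f(x)=g'(x)$ (resp.\ $g''(x)$) whenever $g'(x)>0$, which holds because the added strength-$2$ vertices are forced in $f$ and, for $AllSubcases$, some member of $\mathcal{C}'$ matches how $f$ covers columns $c_5,c_{k-4}$ after discarding forbidden/redundant pieces.
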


\begin{proof}
    \sloppy Assume the conditions of Theorem \ref{prop:lowerbounddone} and suppose $ProvedLowerBound(H_{m,k},$ $s,$ $t,$ $m_1,$ $m_2,$ $\dots,$ $m_k)$ is true. Let $f$ be an optimal $2$-limited dominating broadcast of $G_{m,n_0}$. Let $C$ be the subgraph of $G_{m,n_0}$ induced by the vertices appearing in a minimum cost set (with respect to $f$) of $r=k-8$ consecutive columns of $G_{m,n_0}$. Let $H_{m,k}$ be the subgraph of $G_{m,n_0}$ centred on $C$ with columns labelled $c_1, c_2, \dots, c_k$ and let $C'$ be the subgraph of $H_{m,k}$ induced by columns $c_7, c_8, \dots, c_{k-6}$. Note the vertices of $C$ are in columns $c_5, c_6,\dots , c_{k-4}$ of $H_{m,k}$ and $C' = H_{m,k-12}$. The sub-broadcast $g$ of $f$ induced by $V(C)$ must dominate $C'$ (Observation \ref{observation2}). As $s = \gamma_{b,2}(C')$, we have that $cost(g) \geq s$. By our choice of $C$,
    \begin{equation*}
    \begin{aligned}
        cost(f) \geq \frac{n \cdot cost(g)}{r} = \frac{n \cdot cost(g)}{k-8}.
    \end{aligned}
    \end{equation*}
    If $cost(g) > t$ then, by the definition of $t$, $cost(f) \geq B(n)$ for all $n \geq n_0$. Thus, $cost(g) \le t$. As $\mathcal{C}$ defined on line 2 is the set of all possible sub-broadcasts of cost between $s$ and $t$, inclusive, induced by the vertices of columns $c_5, c_6, \dots, c_{k-4}$, $g \in \mathcal{C}$. As $ProvedLowerBound$ returned true, one of the function calls on lines 4 through 10 returned true for $g$. From the results in Sections \ref{middlesec} through \ref{lastsec}, this proves the claim.
\end{proof}

\section{Results}\label{sec:bactrack:app of methods}
Our implementation includes a canonicity test for $ P_{m} \square P_{n} $ so as to only consider the set $ \mathcal{C}$ of all possible sub-broadcasts induced by the vertices of some set of $r$ consecutive columns  with costs between $s$ and $t$, inclusive, up to isomorphism. That is, for each pair of broadcasts $g, g^* \in \mathcal{C}$, there is no group action on $P_m \square P_n$ which defines an automorphism between $g$ and $g^*$. This test was done by checking that each broadcast (when expressed as a sequence)  was the maximum lexicographically when compared to all broadcasts isomorphic to it. When adapting the code to work on $ C_{m} \square C_{n} $, we did not update the canonicity test to reduce the number of cases up to isomorphism on $ C_{m} \square P_{n} $ from $ P_{m} \square P_{n} $. Fortunately, this redundancy was acceptable in terms of run time. The number of induced sub-broadcasts (i.e. $\lvert \mathcal{C} \rvert $) have been verified by P\'{o}lya's Theorem (see \cite[Theorem 14.3.3]{brualdi2010introductory}).

Our implementation of $ProvedLowerBound$ has allowed us to prove Theorems \ref{thm:backtrack:C3} through \ref{thm:backtrack:PC5}, and their respective corollaries. For each theorem, we include a table which summarizes the number of broadcasts rejected at each step of the algorithm per considered cost.  Steps with zero cases are omitted. Additionally, as $AllSubcases$ considers all possible induced sub-broadcasts of a given case, the total number of cases considered will be at least $ \left| \mathcal{C} \right| $.

Our implementation is written in C++ and available here \cite{Slobodin_C_implementation_of_2022}. All ILP calls are run with a Gurobi solver \cite{gurobi}. All computations in this section were run on Slobodin's 2021 16GB MacBook Pro with an Apple M1 Pro processor.

\begin{thm}\label{thm:backtrack:C3}
For $n \geq 3$, $\gamma_{b,2} \left( C_{3} \square C_{n} \right)  =  \left\lceil \frac{ 2n }{ 3  }  \right\rceil$.

\end{thm}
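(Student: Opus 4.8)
The plan is to establish the two bounds $\gamma_{b,2}(C_3 \square C_n) \le \lceil 2n/3 \rceil$ and $\gamma_{b,2}(C_3 \square C_n) \ge \lceil 2n/3 \rceil$ separately, proving the first by an explicit periodic construction and the second by invoking the machinery of Theorem~\ref{prop:lowerbounddone}.

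For the upper bound I would exhibit a dominating broadcast of cost $\lceil 2n/3 \rceil$. Because $C_3$ has diameter $1$, a strength-$2$ broadcast from $(i,j)$ dominates all nine vertices of columns $j-1$, $j$, and $j+1$ (and, additionally, the single same-row vertices of columns $j\pm 2$). Placing strength-$2$ broadcasts at columns $2, 5, 8, \dots$ therefore dominates three consecutive columns per broadcast, at a rate of cost $2$ per three columns. If $n \equiv 0 \pmod 3$ this gives cost exactly $2n/3$; if $n \equiv 1 \pmod 3$ the single leftover column is dominated by one extra strength-$1$ broadcast; and if $n \equiv 2 \pmod 3$ the two leftover columns are dominated by one extra strength-$2$ broadcast. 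In each residue class the total is exactly $\lceil 2n/3 \rceil$. (This inequality could also be quoted from prior work.)

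For the lower bound I would set $B(n) = \lceil 2n/3 \rceil$, $m = 3$, and $G_{3,n} = C_3 \square C_n$, and then apply Theorem~\ref{prop:lowerbounddone}. Concretely, I would fix some $k \ge 13$; verify the finitely many base cases $\gamma_{b,2}(C_3 \square C_n) \ge \lceil 2n/3 \rceil$ for $n < n_0$ by solving \ref{ILP} with the Gurobi solver; record the gap parameters $m_i = \max_{n \ge n_0}\{B(n) - B(n-i)\}$, which are finite (indeed $m_i = \lceil 2i/3 \rceil$) since $\lceil 2n/3 \rceil$ has period-$3$ increments; and take $s = \gamma_{b,2}(H_{3,k-12})$ and $t = \max\{\ell : \exists\, n \ge n_0,\ \tfrac{n\ell}{k-8} < B(n)\}$ as prescribed. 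It then remains to run \textit{ProvedLowerBound}$(H_{3,k}, s, t, m_1, \dots, m_k)$ and confirm that it returns true; by Theorem~\ref{prop:lowerbounddone} this yields $B(n) \le \gamma_{b,2}(C_3 \square C_n)$ for all $n$, completing the proof.

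I expect the lower bound to be the main obstacle. A naive counting argument is too weak: a strength-$2$ broadcast dominates at most eleven vertices, so double counting the $3n$ vertices gives only $\gamma_{b,2}(C_3 \square C_n) \ge 6n/11 \approx 0.545\,n$, short of the target $\lceil 2n/3 \rceil \approx 0.667\,n$. This gap is exactly what forces the finer column-window analysis. The real work is confirming that the exhaustive enumeration closes every case, i.e.\ that each sub-broadcast of cost between $s$ and $t$ on the chosen window is either impossible in an optimum, dominable at strictly smaller cost, or forces the inductive bound, while choosing $k$ large enough for the window argument yet small enough to keep the number of cases tractable. I would summarize the outcome in a per-cost rejection table, as the paper does for its other theorems.
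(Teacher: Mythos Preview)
Your proposal is correct and follows essentially the same approach as the paper. The paper cites prior work for the upper bound (where you give an explicit construction) and then applies Theorem~\ref{prop:lowerbounddone} with the specific choices $r=6$, $k=14$, $s=2$, $t=3$, listing $(m_1,\dots,m_{14})=(1,2,2,3,4,4,5,6,6,7,8,8,9,10)$; your closed form $m_i=\lceil 2i/3\rceil$ reproduces exactly this list, and the remaining step---verifying that \textit{ProvedLowerBound} returns true---is the same computation.
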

\begin{proof}
Theorem 6 of \cite{slobodinfinal} proves that $ \gamma_{b,2} \left( C_{3} \square C_{n} \right) \leq \left\lceil 2n/3  \right\rceil  $. Fix $r=6$ and let $k =14 = r+8$. By computation, we know that the upper bound is optimal for all $3 \leq n \leq 16 = k+2$. Given the upper bound, for $1 \leq i \leq 14 = k$, $m_i$ is defined as follows:
\begin{equation*}
\begin{aligned}
( m_1, m_2,\dots, m_{14} ) = (  1, 2, 2, 3, 4, 4, 5, 6, 6, 7, 8, 8, 9, 10).
\end{aligned}
\end{equation*}
\sloppy As $r-4 = 2$ and $ \gamma_{b,2} \left( C_3 \square P_2 \right) = 2$, set $s=2$. Set $t = 3$. Observe that, for $n = 17$, 
\begin{equation*}
\begin{aligned}
\left\lceil \frac{3n}{6}  \right\rceil = 9 < 12 = \left\lceil \frac{2n}{3}  \right\rceil = B(n),
\end{aligned}
\end{equation*}
thus $t \geq 3$. If $t>3$, then there exists an $n > 16$ such that
\begin{equation*}
\begin{aligned}
\left\lceil \frac{4n}{6}  \right\rceil \leq \frac{tn }{6} < B(n) = \left\lceil \frac{4n}{6}  \right\rceil,
\end{aligned}
\end{equation*}
which is a contradiction. As $ProvedLowerBound\left( C_3 \square P_{14}, 2, 3, m_1, m_2,\dots, m_{14}\right)$ is true, the result follows. 
\end{proof}

Running \textit{ProvedLowerBound} for the above values took less than one second. 
\begin{table}[htbp]
\centering
\begin{tabular}{l|c|c|}
& Cost 2 & Cost 3  \\ \hline
$ \left| \mathcal{C} \right|$  & 54 & 302  \\\hline
$DoesNotDominate$\hfill & 48 & 231\\\hline
$ForbiddenBroadcast$\hfill & 4 & 45\\\hline
$InductiveArgument$\hfill & 0 & 12 \\\hline
$NecessaryBroadcast+HasBroadcast$ & 0 & 8\\ \hline
$NecessaryBroadcast+InductiveArgument$ & 2 & 3\\ \hline
$AllSubcases+HasBroadcast$ & 0 & 45\\ \hline
$AllSubcases+InductiveArgument$ & 0 & 63\\
\end{tabular}
\caption{Cases considered in the proof of Theorem \ref{thm:backtrack:C3}.}
\label{table:backtrackC3}
\end{table}

\begin{cor}\label{cor:backtrack:C3Pn}
For $m \geq 3$, $\gamma_{b,2} \left( P_{m} \square C_{3}  \right) = \left\lceil \frac{ 2m }{ 3 }  \right\rceil$.
\end{cor}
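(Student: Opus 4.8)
The plan is to sandwich $\gamma_{b,2}(P_m \square C_3)$ between matching bounds, obtaining the lower bound essentially for free from Theorem \ref{thm:backtrack:C3} and exhibiting a periodic broadcast for the upper bound. So the corollary should require no new computation: the substantive work is already contained in Theorem \ref{thm:backtrack:C3}, and what remains is elementary monotonicity plus a short construction.

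First I would record a general monotonicity principle: if $H$ is a spanning subgraph of $G$, then $\gamma_{b,2}(G) \le \gamma_{b,2}(H)$. This holds because $d_G(u,v) \le d_H(u,v)$ for all $u,v$, so any $2$-limited dominating broadcast of $H$ remains dominating, at the same cost, when read off on $G$. Since $P_m$ is a spanning subgraph of $C_m$ for every $m \ge 3$, the graph $P_m \square C_3$ is a spanning subgraph of $C_m \square C_3$. Using commutativity of the Cartesian product together with Theorem \ref{thm:backtrack:C3} applied with $n = m$,
\[
\gamma_{b,2}(P_m \square C_3) \ge \gamma_{b,2}(C_m \square C_3) = \gamma_{b,2}(C_3 \square C_m) = \left\lceil \tfrac{2m}{3}\right\rceil.
\]

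Second, for the upper bound I would construct an explicit $2$-limited dominating broadcast of cost $\lceil 2m/3\rceil$. Viewing $P_m \square C_3$ as $m$ triangle-columns joined in a path, the key distance fact is that a strength-$2$ broadcast placed at one vertex of column $i$ dominates all three vertices of each of columns $i-1, i, i+1$: indeed, since $C_3$ has diameter $1$, $d((i,a),(i\pm1,b)) = 1 + d_{C_3}(a,b) \le 2$, while a strength-$1$ broadcast dominates its entire column. I would then tile the columns in blocks of three, placing one strength-$2$ broadcast in the middle column of each full block, and handle the leftover $m \bmod 3$ columns with one additional strength-$1$ broadcast when one column remains and one additional strength-$2$ broadcast when two columns remain. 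A short case check on $m \bmod 3 \in \{0,1,2\}$ confirms the total cost equals $\lceil 2m/3\rceil$ in each case, yielding $\gamma_{b,2}(P_m \square C_3) \le \lceil 2m/3\rceil$ and hence equality.

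The arithmetic and the distance bookkeeping are entirely routine, so I do not anticipate a genuine obstacle. The only points that demand care are getting the direction of the subgraph inequality right, namely that monotonicity gives $\le$ for the \emph{denser} graph $C_m \square C_3$ and therefore a \emph{lower} bound for $P_m \square C_3$, and correctly invoking Theorem \ref{thm:backtrack:C3} after the commutativity swap $C_m \square C_3 \cong C_3 \square C_m$. With those two bookkeeping steps handled, the matching bounds close immediately.
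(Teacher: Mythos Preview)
Your proposal is correct and the lower-bound half is exactly the paper's argument: pass from $P_m \square C_3$ to its spanning supergraph $C_m \square C_3 \cong C_3 \square C_m$ and invoke Theorem~\ref{thm:backtrack:C3}.

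The upper bound is where you diverge. The paper does not construct a broadcast here; it simply verifies small cases $3 \le m \le 22$ computationally and, for $m \ge 23$, cites the general upper bound for $\gamma_{b,2}(P_m \square C_3)$ established as Theorem~5 in the companion paper \cite{slobodinfinal}. Your approach instead gives an explicit, self-contained construction: one strength-$2$ broadcast in the middle of each full block of three triangle-columns, plus a strength-$1$ or strength-$2$ patch for the remainder. Your distance check that a strength-$2$ vertex in column $i$ dominates all of columns $i-1,i,i+1$ is correct since $C_3$ has diameter $1$, and the cost arithmetic over $m \bmod 3$ matches $\lceil 2m/3 \rceil$ in each case. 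What your route buys is independence from the external citation and from computer verification of base cases; what the paper's route buys is brevity, since the upper bound is already recorded elsewhere in the same project.
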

\begin{proof}
The bound is easily verified by computation for $3 \leq m \leq 22$. Theorem 5 of \cite{slobodinfinal} proves that $\gamma_{b,2} \left( P_{m} \square C_{3}  \right) \leq \left\lceil 2n/3  \right\rceil  $ for all $m \geq 23$. As any $2$-limited dominating broadcast on $ P_{m} \square C_{3} $ is a $2$-limited dominating broadcast on $ C_{m} \square C_{3} $, $ \gamma_{b,2} \left( C_{3} \square C_{m} \right) \leq \gamma_{b,2} \left( P_{m} \square C_{3} \right) $. The result follows from Theorem \ref{thm:backtrack:C3}.
\end{proof}

\begin{thm}\label{thm:backtrack:C4}
For $n \geq 4$, $ \gamma_{b,2} \left( C_{4} \square C_{n} \right)  =  4 \left\lfloor \frac{ n }{ 6 }  \right\rfloor +  \begin{cases}
0 & \textnormal{for } n \equiv 0 \pmod{ 6}, \\
2 & \textnormal{for } n \equiv 1 \textnormal{ or } 2 \pmod{ 6}, \\
3 & \textnormal{for } n \equiv 3 \textnormal{ or } 4 \pmod{ 6}, \textnormal{ and}   \\
4 & \textnormal{for } n \equiv 5 \pmod{ 6 }. \\
\end{cases}$
\end{thm}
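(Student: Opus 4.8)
The plan is to follow the template of the proof of Theorem~\ref{thm:backtrack:C3} essentially verbatim, since Theorem~\ref{prop:lowerbounddone} reduces the whole lower bound to a single successful call of $ProvedLowerBound$. Write $B(n)$ for the right-hand side of the statement. The matching upper bound $\gamma_{b,2}(C_4 \square C_n) \le B(n)$ is supplied by the construction in \cite{slobodinfinal}, so all of the work lies in the lower bound, which I would obtain from Theorem~\ref{prop:lowerbounddone}.

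First I would fix a column count $r \ge 5$, set $k = r+8$, and confirm by computation that the formula is exact for every base case $4 \le n \le k+2$, thereby establishing the inductive hypothesis of Theorem~\ref{prop:lowerbounddone} with $n_0 = k+3$. The vector $(m_1,\dots,m_k)$ depends only on $B$. Because $B(n+6) = B(n)+4$, each difference $B(n)-B(n-i)$ is periodic in $n$ modulo $6$, so $m_i = \max_{n \ge n_0}\{B(n)-B(n-i)\}$ is finite and $m_{i+6} = m_i + 4$. Tabulating $B$ over one period gives $(m_1,m_2,m_3,m_4,m_5,m_6) = (2,2,3,3,4,4)$, and the remaining entries follow from the recurrence; for $r=6$ this yields $(m_1,\dots,m_{14}) = (2,2,3,3,4,4,6,6,7,7,8,8,10,10)$.

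Next I would compute the two cost cut-offs. The lower one is $s = \gamma_{b,2}(C_4 \square P_{r-4})$, a fixed finite number found by computation; for $r=6$ one has $C_4 \square P_2 \cong Q_3$ and $s = \gamma_{b,2}(C_4 \square P_2) = 2$ (two strength-$1$ broadcasts at antipodal vertices dominate, and cost $1$ cannot). The upper one is $t = \max\{\ell : \exists\, n \ge n_0,\ n\ell/r < B(n)\}$. Writing $n = 6q+\rho$ one checks $B(n) \le (4n+8)/6$, with equality exactly when $\rho = 1$, so $t$ is the largest integer $\ell$ admitting some $n \ge n_0$ with $n\ell/r < (4n+8)/6$. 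For $r = 6$ (hence $k=14$, $n_0 \ge 17$) this gives $t = 4$: at $n = 19 \equiv 1 \pmod 6$ we have $4\cdot 19/6 < 14 = B(19)$, whereas $\ell = 5$ would force $5n/6 < (4n+8)/6$, i.e. $n < 8$, contradicting $n \ge n_0 > 16$. A single verification that $ProvedLowerBound(C_4 \square P_{14}, 2, 4, m_1,\dots,m_{14})$ returns true would then close the argument.

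The crux is not any individual estimate but the choice of $r$. The algorithm returns true only when $r$ consecutive columns carry enough structure that every induced sub-broadcast of cost at most $t$ can be eliminated or shown to force the bound, while the number of candidate sub-broadcasts---and hence the runtime---grows quickly in $r$ and $t$. Because $C_4 \square C_n$ has a genuinely two-dimensional, period-$6$ optimal pattern (in contrast to the thinner $C_3 \square C_n$), I expect $r = 6$ may be too small, in which case one or two full periods of columns could be required; pinning down the least $r$ for which $ProvedLowerBound$ succeeds, and confirming that the resulting case analysis stays tractable, is the main obstacle. Should $r = 6$ fail, the identical template applies after recomputing $s = \gamma_{b,2}(C_4 \square P_{r-4})$ and $t$ from the bound $B(n) \le (4n+8)/6$, with the $m_i$ unchanged apart from their length.
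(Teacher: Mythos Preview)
Your proposal is correct and mirrors the paper's proof essentially verbatim: the paper takes $r=6$, $k=14$, verifies the base cases $3\le n\le 16$ by computation, uses exactly the vector $(m_1,\dots,m_{14})=(2,2,3,3,4,4,6,6,7,7,8,8,10,10)$, sets $s=\gamma_{b,2}(C_4\square P_2)=2$ and $t=4$ via the same $n=19$ witness and $n<8$ contradiction, and finishes by reporting that $ProvedLowerBound(C_4\square P_{14},2,4,m_1,\dots,m_{14})$ returns true. Your worry that $r=6$ might be too small is unfounded---the paper records that this run completes in under one second---so the fallback discussion is unnecessary.
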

\begin{proof}
Theorem 6 of \cite{slobodinfinal} proves that $ \gamma_{b,2} \left( C_{4} \square C_{n} \right) $ is less than or equal to the stated value. Fix $r=6$ and let $k =14 = r+8$. By computation, we know that the stated value is optimal for all $3 \leq n \leq 16 = k+2$. Given the upper bound, for $1 \leq i \leq 14 = k$,  $m_i$ is defined as follows:
\begin{equation*}
\begin{aligned}
( m_1, m_2,\dots, m_{14} ) = (2, 2, 3, 3, 4, 4, 6, 6, 7, 7, 8, 8, 10, 10).
\end{aligned}
\end{equation*}
As $r-4 = 2$ and $ \gamma_{b,2} \left( C_4 \square P_2 \right) = 2$, set $s=2$. Let $n_6$ be the least residue of $n$ modulo $6$ and let $c(n_6)$ be the constant in the upper bound dependent upon $n_6$. Set $t=4$. Observe that, for $n = 19$, 
\begin{equation*}
\begin{aligned}
\left\lceil \frac{4n}{6}  \right\rceil = 13 < 14=4 \left\lfloor \frac{n}{6}  \right\rfloor + 2=4 \left\lfloor \frac{n}{6}  \right\rfloor +n_6 = B(n),
\end{aligned}
\end{equation*}
thus $t \geq 4$.  If $t>4$, then there exists an $n > 16$ such that
\begin{equation*}
\begin{aligned}
\frac{5n}{6} \leq \frac{tn }{6} < B(n) =4 \left\lfloor \frac{n}{6}  \right\rfloor + c(n_6) = \frac{4n}{6} - \frac{4n_6}{6} + c(n_6) \leq \frac{4n}{6} + \frac{4}{3} \Rightarrow n < 8,
\end{aligned}
\end{equation*}
which is a contradiction. As $ProvedLowerBound\left( C_4 \square P_{14}, 2, 4, m_1, m_2,\dots, m_{14}\right)$ is true, the result follows. 
\end{proof}

Running \textit{ProvedLowerBound} for the above values took less than one second.
\begin{table}[htbp]
\centering
\begin{tabular}{l|c|c|c|}
& Cost 2 & Cost 3 & Cost 4 \\ \hline
$ \left| \mathcal{C} \right|$ & 84 & 644 & 4,302  \\\hline
$DoesNotDominate$ & 83 & 610 & 3,770\\\hline
$ForbiddenBroadcast$ & 0 & 16 & 378\\\hline
$InductiveArgument$ & 0 & 0 & 98 \\\hline
$NecessaryBroadcast + HasBroadcast$ & 1 & 16 & 33\\\hline
$NecessaryBroadcast+InductiveArgument$ &  0 &  2 &  20\\\hline
$AllSubcases+HasBroadcast$ &  0 &  0 &  2\\\hline
$AllSubcases+InductiveArgument$ &  0 &  0 &   30\\
\end{tabular}
\caption{Cases considered in the proof of Theorem \ref{thm:backtrack:C4}.}
\label{table:backtrackC4}
\end{table}

\begin{cor}\label{cor:backtrack:PnC4}
For $m \geq 3$,
$\gamma_{b,2} \left( P_m \square C_4 \right) = 4 \left\lfloor \frac{ m }{ 6 }  \right\rfloor +  
\begin{cases}
0 \textnormal{ or } 1 & \textnormal{for } m \equiv 0 \pmod{ 6 }, \\
2 & \textnormal{for } m \equiv 1 \textnormal{ or } 2 \pmod{ 6 },\\
3 & \textnormal{for } m \equiv 3 \pmod{ 6 },  \\
3 \textnormal{ or } 4  & \textnormal{for } m \equiv 4 \pmod{ 6 }, \textnormal{ and}   \\
4 & \textnormal{for } m \equiv 5  \pmod{ 6 }. \\
\end{cases}$
\end{cor}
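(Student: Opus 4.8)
The plan is to bracket $\gamma_{b,2}(P_m \square C_4)$ between an explicit construction (the upper bound) and the torus value of Theorem~\ref{thm:backtrack:C4} (the lower bound), exactly in the spirit of the proof of Corollary~\ref{cor:backtrack:C3Pn}. First I would dispose of the small cases by direct computation, so that only the asymptotic regime requires argument, and then combine the two bounds residue class by residue class modulo $6$.

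For the lower bound I would exploit the edge-addition monotonicity of the $2$-limited broadcast domination number. Writing $P_m \square C_4 = C_4 \square P_m$, the torus $C_4 \square C_m$ is obtained from $C_4 \square P_m$ by adding the wrap-around edges joining the first and last $C_4$-layer. Since adding edges cannot increase any distance, if $f$ is a $2$-limited dominating broadcast on $P_m \square C_4$ with $f(x) \ge d_{P_m \square C_4}(x,y)$ whenever $x$ dominates $y$, then $f(x) \ge d_{C_4 \square C_m}(x,y)$ as well, so $f$ still dominates on the torus; hence $\gamma_{b,2}(C_4 \square C_m) \le \gamma_{b,2}(P_m \square C_4)$. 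Substituting $n = m$ into Theorem~\ref{thm:backtrack:C4} then yields the lower bounds $4\lfloor m/6 \rfloor$, $4\lfloor m/6 \rfloor + 2$, $4\lfloor m/6 \rfloor + 3$, and $4\lfloor m/6 \rfloor + 4$ in the classes $m \equiv 0$, $m \equiv 1,2$, $m \equiv 3,4$, and $m \equiv 5 \pmod 6$, respectively.

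For the upper bound I would invoke the explicit periodic construction for $P_m \square C_4$ from the companion paper \cite{slobodinfinal} (the analogue of the Theorem~5 used in Corollary~\ref{cor:backtrack:C3Pn}), which supplies dominating broadcasts of cost $4\lfloor m/6 \rfloor$ plus $1$, $2$, $3$, $4$, $4$ in the classes $m \equiv 0$, $1$ or $2$, $3$, $4$, $5 \pmod 6$. Matching the two bounds class by class then completes the proof: in $m \equiv 1,2$ both equal $4\lfloor m/6\rfloor + 2$, in $m \equiv 3$ both equal $4\lfloor m/6\rfloor + 3$, and in $m \equiv 5$ both equal $4\lfloor m/6\rfloor + 4$, so these three classes are pinned exactly; in $m \equiv 0$ the lower bound is $4\lfloor m/6\rfloor$ while the construction gives $+1$, and in $m \equiv 4$ the lower bound is $+3$ while the construction gives $+4$, leaving a gap of exactly one — precisely the ``$0$ or $1$'' and ``$3$ or $4$'' entries in the statement.

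The main obstacle is structural rather than technical: the torus relaxation is genuinely lossy in the classes $m \equiv 0$ and $m \equiv 4$, because the wrap-around edges really do lower the domination number there, so this route cannot be expected to close the unit gap. Resolving the ambiguity would instead require running \textit{ProvedLowerBound} directly on the path-times-cycle host $C_4 \square P_k$ to establish the sharper lower bounds $4\lfloor m/6\rfloor + 1$ for $m \equiv 0$ and $4\lfloor m/6\rfloor + 4$ for $m \equiv 4$, or else improving the construction downward; absent one of these, the corollary is only periodically optimal in those two residue classes.
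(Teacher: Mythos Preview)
Your proposal is correct and follows essentially the same route as the paper: verify small cases by computation, take the upper bound from the explicit construction in \cite{slobodinfinal} (Theorem~5 there), and obtain the lower bound by noting that any $2$-limited dominating broadcast on $P_m \square C_4$ remains one on $C_m \square C_4$, so $\gamma_{b,2}(C_4 \square C_m) \le \gamma_{b,2}(P_m \square C_4)$ and Theorem~\ref{thm:backtrack:C4} applies. Your residue-by-residue bookkeeping and your diagnosis of the unit gaps at $m \equiv 0$ and $m \equiv 4 \pmod 6$ as an inherent loss of the torus relaxation are also in line with the paper's treatment.
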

\begin{proof}
The bound is easily verified by computation for $3 \leq m \leq 22$. Theorem 5 of \cite{slobodinfinal} proves that $\gamma_{b,2} \left( P_{m} \square C_{4}  \right)$ is less than or equal to the bound in the corollary statement for $m \geq 23$. As any $2$-limited dominating broadcast on $ P_{m} \square C_{4} $ is a $2$-limited dominating broadcast on $ C_{m} \square C_{4} $, $ \gamma_{b,2} \left( C_{4} \square C_{m} \right) \leq \gamma_{b,2} \left( P_{m} \square C_{4} \right) $. The result follows from Theorem \ref{thm:backtrack:C4}.
\end{proof}

\begin{thm}\label{thm:backtrack:C5}
For $n \geq 5$, $\gamma_{b,2} \left( C_{5} \square C_{n} \right) = n$.
\end{thm}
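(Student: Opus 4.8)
The plan is to mirror the proofs of Theorems \ref{thm:backtrack:C3} and \ref{thm:backtrack:C4} and invoke the lower-bound machinery of Theorem \ref{prop:lowerbounddone}. The matching upper bound $\gamma_{b,2}(C_5 \square C_n) \le n$ will be supplied by \cite{slobodinfinal} (the companion theorem there giving upper bounds for $C_m \square C_n$), so the entire task reduces to establishing the lower bound $B(n) = n \le \gamma_{b,2}(C_5 \square C_n)$ via the inductive scheme of Section \ref{sec:backtrack:actualmethods}.

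First I would fix the induction parameters. Since $B(n) = n$ is linear, $B(n) - B(n-i) = i$ for every $n$, so the displacement constants defined in Section \ref{sec:backtrack:actualmethods} are simply $m_i = i$ (all finite, which is needed for Lemma \ref{thm:alg}); e.g.\ with $k=14$ one has $(m_1,\dots,m_{14}) = (1,2,\dots,14)$. I would then select a column width $r \ge 5$ and set $k = r+8$. Because $B(n)=n$ and $k-8 = r$, the upper threshold of Theorem \ref{prop:lowerbounddone} is immediate: $\tfrac{n\ell}{r} < n \iff \ell < r$, whence $t = r-1$ (this replaces the more delicate $t$-justifications needed in Theorem \ref{thm:backtrack:C4}, which had a nonlinear $B$). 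The lower threshold is the small finite quantity $s = \gamma_{b,2}(C_5 \square P_{r-4})$ — for instance $s = \gamma_{b,2}(C_5) = 2$ if $r=5$, or $s = \gamma_{b,2}(C_5 \square P_2) = 3$ if $r = 6$ — and in either case $s \le t$, so the range of costs to enumerate is nonempty. Following the precedent set by the $C_3$ and $C_4$ arguments, I would try $r = 6$, $k = 14$ first.

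Second, I would discharge the hypotheses of Theorem \ref{prop:lowerbounddone}: verify by a direct ILP solve, one value of $n$ at a time, that $\gamma_{b,2}(C_5 \square C_n) = n$ for all $5 \le n \le k+2$, which supplies the base of the induction (every $n < n_0$ with $n_0 \ge k+3$). With $s$, $t$, and the $m_i$ in hand, I would run $ProvedLowerBound(C_5 \square P_k,\, s,\, t,\, m_1,\dots,m_k)$; if it returns true, Theorem \ref{prop:lowerbounddone} yields $n \le \gamma_{b,2}(C_5 \square C_n)$ for all $n$, matching the upper bound and closing the proof.

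The main obstacle is not this bookkeeping but ensuring that $ProvedLowerBound$ genuinely returns true, i.e.\ that every induced sub-broadcast of cost between $s$ and $t$ on the $r$ central columns is eliminated by one of the six schemes of Sections \ref{middlesec} through \ref{lastsec}. This is delicate precisely because the target constant $1$ is far above the naive volume bound: $C_5 \square C_n$ is $4$-regular, a radius-$2$ ball contains at most $13$ vertices (center, $4$ at distance $1$, $8$ at distance $2$, as the girth-$4$ overlaps cut the count below the tree bound) at cost $2$, so counting alone gives only $\gamma_{b,2} \ge \tfrac{2}{13}\cdot 5n = \tfrac{10n}{13} \approx 0.77n$. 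Consequently the purely local optimality test $HasBroadcast$ cannot, by itself, close the cases; the argument must lean on the inductive patching steps ($InductiveArgument$, $NecessaryBroadcast$, $AllSubcases$), which trade the weak local bound for the assumed bound on the smaller graph $C_5 \square C_{n-|S|}$. Should the chosen $r$ prove too small for these schemes to dispatch every case, the remedy is to enlarge $r$: this raises $t = r-1$ and the number of enumerated sub-broadcasts, but it also widens the window available to the column-deletion/patching argument, and I would increase $r$ until the exhaustive search closes.
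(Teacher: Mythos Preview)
Your proposal is correct and follows essentially the same route as the paper: cite \cite{slobodinfinal} for the upper bound, set $m_i=i$ from the linear $B(n)=n$, derive $t=r-1$ and $s=\gamma_{b,2}(C_5\square P_{r-4})$, check the base cases $n\le k+2$ by computation, and invoke Theorem~\ref{prop:lowerbounddone} once $ProvedLowerBound$ returns true. The only difference is the choice of $r$: the paper uses $r=8$ (so $k=16$, $s=\gamma_{b,2}(C_5\square P_4)=5$, $t=7$) rather than your tentative $r=6$, which is exactly the ``increase $r$ until the search closes'' contingency you anticipated.
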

\begin{proof}
Theorem 6 of \cite{slobodinfinal} proves that $ \gamma_{b,2} \left( C_{5} \square C_{n} \right) \leq n$. Fix $r=8$ and let $k =16 = r+8$. By computation, we know that the upper bound is optimal for all $3 \leq n \leq 18 = k+2$. Given the upper bound, for $1 \leq i \leq 16 = k$, $m_i$ is defined as follows: 
\begin{equation*}
\begin{aligned}
( m_1, m_2,\dots, m_{16} ) = ( 1,2,3,4,5,6,7,8,9,10,11,12,13,14,15,16).
\end{aligned}
\end{equation*}
As $r-4 = 4$ and $ \gamma_{b,2} \left( C_5 \square P_4 \right) = 5$, set $s=5$. Set $t=7$. Observe that, for $n = 19$, 
\begin{equation*}
\begin{aligned}
\left\lceil \frac{7n}{8}  \right\rceil = 17 < 19= n = B(n),
\end{aligned}
\end{equation*}
thus $t \geq 7$. If $t>7$, then there exists an $n > 18$ such that
\begin{equation*}
\begin{aligned}
\frac{8n}{8}  \leq \frac{tn }{8} < B(n) = n,
\end{aligned}
\end{equation*}
which is a contradiction. As $ProvedLowerBound\left( C_5 \square P_{16}, 5, 7, m_1, m_2,\dots, m_{16}\right)$ is true, the result follows. 
\end{proof}

Running \textit{ProvedLowerBound} for the above values took less than one minute.  
\begin{table}[htbp]
\centering
\begin{tabular}{l|c|c|c|}
& Cost 5 & Cost 6 & Cost 7 \\ \hline
$ \left| \mathcal{C} \right|$ & 264,148 & 1,925,104 & 12,162,548  \\\hline
$DoesNotDominate$ & 264,115 & 1,922,880 & 12,103,722\\\hline
$ForbiddenBroadcast$ & 8 & 1,423 & 48,899\\\hline
$HasBroadcast$ & 0 & 161 & 5,198\\\hline
$InductiveArgument$ & 25 & 632 & 4,696 \\\hline
$NecessaryBroadcast$ & \multirow{2}{*}{0} & \multirow{2}*{5} & \multirow{2}*{27}\\
$+InductiveArgument$ & & &\\\hline
$AllSubcases+HasBroadcast$ &  0 &  27 &  0\\\hline
$AllSubcases$ & \multirow{2}*{0} & \multirow{2}*{48} &  \multirow{2}*{30}\\
$+InductiveArgument$ & & &\\       
\end{tabular}
\caption{Cases considered in the proof of Theorem \ref{thm:backtrack:C5}.}
\label{table:backtrackC5}
\end{table}

\begin{cor}\label{cor:backtrack:CP5}
For $m \geq 3$, $m \leq \gamma_{b,2} \left( P_{m} \square C_{5} \right) \leq m+1$.
\end{cor}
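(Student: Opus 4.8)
The plan is to establish Corollary~\ref{cor:backtrack:CP5} by sandwiching $\gamma_{b,2}(P_m \square C_5)$ between $m$ and $m+1$, treating the two inequalities separately since they come from different sources. The upper bound $\gamma_{b,2}(P_m \square C_5) \leq m+1$ should follow from a known construction in \cite{slobodinfinal}, analogous to how the upper bounds were cited in the proofs of Theorem~\ref{thm:backtrack:C3} and Theorem~\ref{thm:backtrack:C4}; I would first verify the small cases computationally (say $3 \leq m \leq 22$) and then invoke the relevant theorem of \cite{slobodinfinal} for large $m$. The lower bound $m \leq \gamma_{b,2}(P_m \square C_5)$ is where the real content lies, and the natural strategy is to relate $P_m \square C_5$ to $C_5 \square C_m$, exactly as the proofs of Corollary~\ref{cor:backtrack:C3Pn} and Corollary~\ref{cor:backtrack:PnC4} reduce to their parent theorems.

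The key observation I would exploit is the monotonicity already used twice in the excerpt: any $2$-limited dominating broadcast on $P_m \square C_5$ is also a $2$-limited dominating broadcast on $C_m \square C_5$, because adding the wrap-around edges in the path direction only decreases distances and hence can only increase the set of dominated vertices. This gives
\begin{equation*}
\gamma_{b,2}(C_5 \square C_m) = \gamma_{b,2}(C_m \square C_5) \leq \gamma_{b,2}(P_m \square C_5).
\end{equation*}
By Theorem~\ref{thm:backtrack:C5}, $\gamma_{b,2}(C_5 \square C_m) = m$ for all $m \geq 5$, so the chain immediately yields $m \leq \gamma_{b,2}(P_m \square C_5)$ in that range. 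The remaining small cases $3 \leq m \leq 4$ (where Theorem~\ref{thm:backtrack:C5} does not directly apply, since it requires $m \geq 5$) would be handled by direct computation, just as the base cases are dispatched in the surrounding results.

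The main obstacle is the low end $m \in \{3,4\}$, where the Cartesian factor $C_5$ is the larger cycle and the reduction to $C_5 \square C_m$ cannot borrow the clean formula of Theorem~\ref{thm:backtrack:C5}. I expect these to be settled purely by the computational check that already underlies every base case in this paper, so they present no conceptual difficulty. Beyond that, the only subtlety worth stating carefully is the direction of the monotonicity inequality: one must confirm that it is the \emph{path} product whose broadcast number is at least the cycle product's, which follows because the cycle product is a supergraph on the same vertex set with no larger distances. Once that is nailed down, the corollary assembles directly from Theorem~\ref{thm:backtrack:C5} and the cited upper bound, with no induction or casework of the kind needed for the theorems themselves.
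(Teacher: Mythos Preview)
Your proposal is correct and follows essentially the same approach as the paper: cite Theorem~5 of \cite{slobodinfinal} for the upper bound $\gamma_{b,2}(P_m\square C_5)\le m+1$, use the monotonicity $\gamma_{b,2}(C_5\square C_m)\le \gamma_{b,2}(P_m\square C_5)$ together with Theorem~\ref{thm:backtrack:C5} for the lower bound when $m\ge 5$, and dispatch $m\in\{3,4\}$ by direct computation. The only cosmetic difference is that the paper invokes the cited upper bound for all $m$ at once rather than splitting off small cases first.
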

\begin{proof}
Theorem 5 of \cite{slobodinfinal} proves that $ \gamma_{b,2}(P_m \square C_5) \leq m+1$. The lower bound is easily verified by computation for $3 \leq m \leq 4$. As any $2$-limited dominating broadcast on $ P_{m} \square C_{5} $ is a $2$-limited dominating broadcast on $ C_{m} \square C_{5} $, $ \gamma_{b,2} \left( C_{5} \square C_{m} \right) \leq \gamma_{b,2} \left( P_{m} \square C_{5} \right) $. The result follows from Theorem \ref{thm:backtrack:C5}.
\end{proof}

\begin{thm}\label{thm:backtrack:C6}
For $n \geq 6$, $\gamma_{b,2} \left( C_{6} \square C_{n} \right)  =  n +  \begin{cases}
0 & \textnormal{for } n \equiv 0 \pmod{ 4} \textnormal{ and}  \\
1 & \textnormal{for } n \equiv 1, 2, \textnormal{ or } 3 \pmod{ 4}. \\
\end{cases}$
\end{thm}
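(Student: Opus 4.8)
The plan is to follow exactly the template used for Theorems \ref{thm:backtrack:C3} through \ref{thm:backtrack:C5}, applying Theorem \ref{prop:lowerbounddone} to the class $G_{6,n} = C_6 \square C_n$ with the periodic target
\[
B(n) = n + \begin{cases} 0 & n \equiv 0 \pmod 4,\\ 1 & n \equiv 1,2,3 \pmod 4.\end{cases}
\]
First I would cite Theorem 6 of \cite{slobodinfinal} for the matching upper bound $\gamma_{b,2}(C_6 \square C_n) \le B(n)$, so that only the lower bound $B(n) \le \gamma_{b,2}(C_6 \square C_n)$ remains. Since the answer has period $4$ in $n$, I would take $r = 8$ (so the inner region $C' = C_6 \square P_{r-4}$ spans a full period of four columns) and $k = r+8 = 16$, and then verify the base cases $3 \le n \le k+2 = 18$ by computation.

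Next I would assemble the inputs to \textit{ProvedLowerBound}. Writing $B(n) = n + c(n \bmod 4)$ with $c = (0,1,1,1)$, a routine evaluation of $m_i = \max_{n \ge n_0}\{B(n) - B(n-i)\}$ gives $m_i = i+1$ when $i \not\equiv 0 \pmod 4$ and $m_i = i$ when $i \equiv 0 \pmod 4$ (the extremal difference $c(a)-c(b)$ equals $1$ precisely when $n \not\equiv 0$ and $n-i \equiv 0 \pmod 4$), that is
\[
(m_1,\dots,m_{16}) = (2,3,4,4,6,7,8,8,10,11,12,12,14,15,16,16).
\]
I would set $s = \gamma_{b,2}(C_6 \square P_4)$, the minimum cost needed to dominate $C'$ (a small constant obtained as a base computation, with $s \le t$), and determine $t$ from its definition with $k-8 = 8$: since $B(n) \le n+1$, taking $n = 19 \equiv 3 \pmod 4$ gives $\tfrac{8n}{8} = n = 19 < 20 = B(n)$, so $t \ge 8$; and $t \ge 9$ would force some $n > 18$ with $\tfrac{9n}{8} \le \tfrac{tn}{8} < B(n) \le n+1$, whence $\tfrac{n}{8} < 1$ and $n < 8$, a contradiction. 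Thus $t = 8$.

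Finally I would run $ProvedLowerBound(C_6 \square P_{16}, s, 8, m_1, \dots, m_{16})$ and, if it returns true, invoke Theorem \ref{prop:lowerbounddone} to conclude $B(n) \le \gamma_{b,2}(C_6 \square C_n)$ for all $n$, which together with the upper bound finishes the proof. The main obstacle here is computational rather than conceptual: $C_6$ has the largest cross-section of the cyclic cases treated so far, so the collection $\mathcal{C}$ of candidate sub-broadcasts of cost between $s$ and $8$ on eight columns of $C_6$ will be considerably larger than in the $C_5$ case, stressing both the enumeration and the repeated ILP calls inside $HasBroadcast$ and $InductiveArgument$. The delicate point is that the density-$1$ sub-broadcasts of cost exactly $8$ correspond to essentially optimal configurations, so they cannot be discarded by $DoesNotDominate$, $ForbiddenBroadcast$, or the optimality test; the burden then falls entirely on $InductiveArgument$, $NecessaryBroadcast$, and $AllSubcases$ to show that each such configuration either fails to extend to a dominating broadcast of the correct density or reduces, via column deletion, to the inductive hypothesis. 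One must therefore trust (or certify separately) that the choices $r=8$ and the $m_i$ above are large enough for that reduction to close on every surviving cost-$8$ case.
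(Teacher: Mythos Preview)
Your proposal is correct and matches the paper's proof essentially line for line: the paper also takes $r=8$, $k=16$, verifies base cases up to $n=18$, computes the identical tuple $(m_1,\dots,m_{16}) = (2,3,4,4,6,7,8,8,10,11,12,12,14,15,16,16)$, sets $s=\gamma_{b,2}(C_6\square P_4)=5$ and $t=8$ via the same $n=19$ witness and the same $n<8$ contradiction, and then invokes $ProvedLowerBound(C_6\square P_{16},5,8,m_1,\dots,m_{16})$. The only omission is that you left $s$ unspecified rather than recording $s=5$.
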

\begin{proof}
Theorem 6 of \cite{slobodinfinal} proves that $ \gamma_{b,2} \left( C_{6} \square C_{n} \right) $ is less than or equal to the stated value. Fix $r=8$ and let $k =16 = r+8$. By computation, we know that the stated value is optimal for all $3 \leq n \leq 18 = k+2$. Given the upper bound, for $1 \leq i \leq 16 = k$, $m_i$ is defined as follows:
\begin{equation*}
\begin{aligned}
( m_1, m_2,\dots, m_{16} ) = (2, 3, 4, 4, 6, 7, 8, 8, 10, 11, 12, 12, 14, 15, 16, 16).
\end{aligned}
\end{equation*}
As $r-4 = 4$ and $ \gamma_{b,2} \left( C_6 \square P_4 \right) = 5$, set $s=5$. Let $n_4$ be the least residue of $n$ modulo $4$ and let $c(n_4)$ be the constant in the upper bound dependent upon $n_4$. Set $t=8$. Observe that, for $n = 19$, 
\begin{equation*}
\begin{aligned}
\left\lceil \frac{8n}{8}  \right\rceil = 19 < 20=n+1 = n+c(n_4) = B(n),
\end{aligned}
\end{equation*}
thus $t \geq 8$. If $t>8$, then there exists an $n > 18$ such that
\begin{equation*}
\begin{aligned}
\frac{9n}{8} \leq \frac{tn}{8} < B(n) = n + c(n_4) \leq n+1 \Rightarrow n < 8
\end{aligned}
\end{equation*}
which is a contradiction. As $ProvedLowerBound\left( C_6 \square P_{16}, 5, 8, m_1, m_2,\dots, m_{16}\right)$ is true, the result follows. 
\end{proof}

Running \textit{ProvedLowerBound} for the above values took less than seven minutes. 
\begin{table}[htbp]
\centering
\begin{tabular}{l|c|c|c|c|}
& Cost 5 & Cost 6 & Cost 7 & Cost 8\\ \hline
$ \left| \mathcal{C} \right|$ & 635,628 & 5,506,384 & 41,289,876 & 273,548,430 \\\hline
$DoesNotDominate$ & 635,625 & 5,506,080 & 41,277,225 & 273,227,125\\\hline
$ForbiddenBroadcast$ & 0 & 138 & 9,204 & 278,760\\\hline
$HasBroadcast$ & 0 & 21 & 1,368 & 31,477\\\hline
$InductiveArgument$ & 0 & 67 & 1,698 & 10,361 \\\hline
$NecessaryBroadcast$ & \multirow{2}*{3} & \multirow{2}*{78} & \multirow{2}*{330} &  \multirow{2}*{563}\\
$+$ $HasBroadcast$ & & & &\\\hline 
$NecessaryBroadcast$ & \multirow{2}{*}{0} & \multirow{2}*{0} & \multirow{2}*{39} &  \multirow{2}*{102}\\
$+InductiveArgument$ & &  & &\\\hline
$AllSubcases$ & \multirow{2}*{0} & \multirow{2}*{0} & \multirow{2}*{1,262} &  \multirow{2}*{5914}\\
$+HasBroadcast$ & & & &\\\hline
$AllSubcases$ & \multirow{2}*{0} & \multirow{2}*{0} &  \multirow{2}*{78} &  \multirow{2}*{204}\\
$+InductiveArgument$ & & & &\\
\end{tabular}
\caption{Cases considered in the proof of Theorem \ref{thm:backtrack:C6}.}
\label{table:backtrackC6}
\end{table}

\begin{cor}\label{cor:backtrack:PnC6}
For $m \geq 3$, $\gamma_{b,2} \left( P_m \square C_6 \right) =  m + \begin{cases}
0 \textnormal{ or } 1 & \textnormal{for } m \equiv 0 \pmod{ 4} \textnormal{ and}  \\
1 & \textnormal{for } m \equiv 1, 2, \textnormal{ or } 3 \pmod{ 4}. \\
\end{cases}$
\end{cor}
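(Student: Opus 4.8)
The plan is to mirror the argument used for Corollaries \ref{cor:backtrack:C3Pn}, \ref{cor:backtrack:PnC4}, and \ref{cor:backtrack:CP5}: I would sandwich $\gamma_{b,2}(P_m \square C_6)$ between an upper bound quoted from \cite{slobodinfinal} and a lower bound inherited from the torus result Theorem \ref{thm:backtrack:C6} via a cylinder-to-torus comparison. First I would dispose of the small cases computationally, verifying that the claimed values hold for $3 \le m \le 22$; this in particular settles $m \in \{3,4,5\}$, which are not covered by Theorem \ref{thm:backtrack:C6} since that statement requires $n \ge 6$.

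For the upper bound when $m \ge 23$, I would invoke Theorem 5 of \cite{slobodinfinal}, which furnishes a $2$-limited dominating broadcast of $P_m \square C_6$ of cost at most the value in the corollary statement; in particular it yields $m+1$ when $m \equiv 1,2,3 \pmod 4$ and at most $m+1$ when $m \equiv 0 \pmod 4$. The lower bound is the crux. The key observation is that $C_m \square C_6$ is obtained from $P_m \square C_6$ simply by adding the edges joining row $1$ to row $m$, so distances in $C_m \square C_6$ never exceed the corresponding distances in $P_m \square C_6$. Consequently any $2$-limited dominating broadcast of $P_m \square C_6$ remains dominating when viewed as a broadcast on $C_m \square C_6$, whence $\gamma_{b,2}(C_m \square C_6) \le \gamma_{b,2}(P_m \square C_6)$. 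By commutativity of the Cartesian product, $C_m \square C_6 = C_6 \square C_m$, so for $m \ge 6$ Theorem \ref{thm:backtrack:C6} (applied with $n = m$) evaluates $\gamma_{b,2}(C_6 \square C_m)$ exactly, giving the lower bound $m$ when $m \equiv 0 \pmod 4$ and $m+1$ when $m \equiv 1,2,3 \pmod 4$.

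Combining the two bounds finishes the argument. For $m \not\equiv 0 \pmod 4$ the lower bound $m+1$ meets the upper bound $m+1$, so $\gamma_{b,2}(P_m \square C_6) = m+1$ exactly; for $m \equiv 0 \pmod 4$ we obtain only $m \le \gamma_{b,2}(P_m \square C_6) \le m+1$, which is precisely the ``$0$ or $1$'' recorded in the statement (hence the phrase \emph{periodically optimal} used in the abstract).

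The main obstacle, and the only place requiring genuine care, is bookkeeping rather than new ideas: I must confirm that the upper bound quoted from \cite{slobodinfinal} has exactly the residue-dependent form asserted here, so that it coincides with the Theorem \ref{thm:backtrack:C6} lower bound away from $m \equiv 0 \pmod 4$, and I must check that the computational base range $3 \le m \le 22$ genuinely covers the cases $m \in \{3,4,5\}$ that lie outside the hypothesis $n \ge 6$ of Theorem \ref{thm:backtrack:C6}. No further case analysis via \textit{ProvedLowerBound} is needed, since all the lower-bound difficulty has already been absorbed into the proof of Theorem \ref{thm:backtrack:C6}.
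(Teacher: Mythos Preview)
Your proposal is correct and matches the paper's own proof: the paper likewise cites Theorem~5 of \cite{slobodinfinal} for the upper bound $m+1$, uses the observation that any $2$-limited dominating broadcast on $P_m \square C_6$ is one on $C_m \square C_6$ to inherit the lower bound from Theorem~\ref{thm:backtrack:C6}, and handles the small cases by direct computation. The only cosmetic difference is that the paper checks just $3 \le m \le 5$ computationally (since Theorem~\ref{thm:backtrack:C6} already covers $m \ge 6$) and quotes the upper bound without the $m \ge 23$ restriction, so your base range is larger than necessary but harmless.
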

\begin{proof}
Theorem 5 of \cite{slobodinfinal} proves that $\gamma_{b,2}(P_m \square C_6)\leq m+1$. The lower bound is easily verified by computation for $3 \leq m \leq 5$. As any $2$-limited dominating broadcast on $ P_{m} \square C_{6} $ is a $2$-limited dominating broadcast on $ C_{m} \square C_{6} $, $ \gamma_{b,2} \left( C_{6} \square C_{m} \right) \leq \gamma_{b,2} \left( P_{m} \square C_{6} \right) $. The result follows from Theorem \ref{thm:backtrack:C6}.
\end{proof}

\begin{thm}\label{thm:backtrack:PC4}
For $n \geq 3$, $\gamma_{b,2} \left( P_4 \square C_n \right) = 8 \left\lfloor \frac{ n }{ 10 }  \right\rfloor + c(n_{10})$ 
where $c(n_{10}) $ is dependent upon the least residue $n_{10}$ of $n$ modulo $10$ and given in Table \ref{table:endtermPC4}.
\begin{table}[htbp]
\centering
\begin{tabular}{c|cccccccccccc}
& \multicolumn{12}{c}{Least residue $n_{10}$ of $n$ modulo $10$} \tabularnewline
$n_{10}$ & $0$ & $1$ & $2$ & $3$ & $4$ & $5$ & $6$ &$ 7$ &$ 8$ &$ 9$  \\ \hline
$c(n_{10})$: & $0$ & $2$ & $2$ & $3$ & $4$ & $5$ & $5$ & $6$ & $7$ & $8$ \\
\end{tabular}
\caption{Values of $c(n_{10})$ for  $ \gamma_{b,2} \left( P_4 \square C_{n} \right) $ stated in Theorem \ref{thm:backtrack:PC4}.}
\label{table:endtermPC4}
\end{table}
\end{thm}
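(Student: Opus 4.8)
The plan is to instantiate the general machinery of Theorem \ref{prop:lowerbounddone} with $G_{m,n} = P_4 \square C_n$ (so that $m=4$ is the path direction and $n$ the cyclic one), exactly as in the proofs of Theorems \ref{thm:backtrack:C3} through \ref{thm:backtrack:C6}. First I would invoke the matching upper bound $\gamma_{b,2}(P_4 \square C_n) \le 8\lfloor n/10\rfloor + c(n_{10})$ from \cite{slobodinfinal}, so that only the lower bound remains. I would then fix a number of central columns $r$ with $k = r+8$ and label the input graph $H_{4,k} = P_4 \square P_k$. Because the formula has period $10$ and slope $4/5$, I expect to need $r \approx 10$ (so $k \approx 18$) for the central window to see a full period; the precise value is whatever makes $ProvedLowerBound$ return true while staying computationally tractable. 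With $r$ fixed, the base cases $3 \le n \le k+2$ are verified directly by solving \ref{ILP}.

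Next I would supply the parameters demanded by Theorem \ref{prop:lowerbounddone}. Writing $B(n) = \frac{4n}{5} - \frac{4 n_{10}}{5} + c(n_{10})$, the differences $B(n) - B(n-i)$ are periodic in $n$ with period $10$, so each $m_i = \max_{n\ge n_0}\{B(n)-B(n-i)\}$ is a finite, explicitly computable integer obtained by maximizing over one residue block; I would tabulate $(m_1,\dots,m_k)$ from the formula. I would set $s = \gamma_{b,2}(P_4 \square P_{r-4})$ (computed via \ref{ILP}), matching the prescription $s = \gamma_{b,2}(H_{m,k-12})$. For $t$, taking $r=10$, the inequality $\frac{n\ell}{r} < B(n)$ holds for some large $n$ exactly when $\ell \le 8$: choosing $n \equiv 1 \pmod{10}$ gives $\frac{8n}{10} = 8\lfloor n/10\rfloor + \tfrac{4}{5} < 8\lfloor n/10\rfloor + 2 = B(n)$, so $t \ge 8$, while for $\ell = 9$ the inequality $\frac{9n}{10} < B(n)$ rearranges to $\lfloor n/10\rfloor < c(n_{10}) - \tfrac{9 n_{10}}{10} \le \tfrac{11}{10}$, forcing $n \le 19 < n_0$; hence $t = 8$.

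With $s$, $t$, and the $m_i$ in hand, the proof reduces to reporting that $ProvedLowerBound(P_4 \square P_{k}, s, 8, m_1,\dots,m_k)$ evaluates to true, whereupon Theorem \ref{prop:lowerbounddone} yields $B(n) \le \gamma_{b,2}(P_4 \square C_n)$ for all $n$, and combining with the cited upper bound gives equality. I would close by including the summary table of how many of the $|\mathcal{C}|$ sub-broadcasts are eliminated at each test and each cost level, in the style of Tables \ref{table:backtrackC3} through \ref{table:backtrackC6}.

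The main obstacle is computational rather than conceptual: the period-$10$ behaviour pushes $t$ up to $8$ while forcing a fairly wide central window, so $|\mathcal{C}|$ at costs $7$ and $8$ is liable to reach the hundreds of millions (cf. Table \ref{table:backtrackC6}), and the real question is whether the six elimination tests of Sections \ref{middlesec} through \ref{lastsec} clear \emph{every} remaining case at the chosen $r$. Should some case survive, the remedy is to enlarge $r$ (and hence $s$ and the window) until the search closes, trading running time for a valid proof; confirming that the chosen $r$ suffices, and that the run terminates in acceptable time, is the crux.
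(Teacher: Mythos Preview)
Your proposal is correct and follows essentially the same approach as the paper: fix $r=10$, $k=18$, verify base cases $3\le n\le 20$ by computation, take $s=\gamma_{b,2}(P_4\square P_6)=6$, $t=8$, tabulate the $m_i$ from the periodic differences of $B$, and invoke Theorem~\ref{prop:lowerbounddone} after $ProvedLowerBound(P_4\square P_{18},6,8,m_1,\dots,m_{18})$ returns true. The only discrepancies are cosmetic (the paper lists the explicit $m_i$ and uses a slightly cruder bound $n<12$ in the $t\le 8$ step), and the actual case counts in Table~\ref{table:backtrack:PC4} top out around $6.8\times 10^7$, well below your pessimistic estimate.
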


\begin{proof}
Theorem 3 of \cite{slobodinfinal} proves that $ \gamma_{b,2} \left( P_{4} \square C_{n} \right) $ is less than or equal to the stated value. Fix $r=10$ and let $k=18=r+8$. By computation, we know that the stated value is optimal for all $3 \leq n \leq 20 = k+2$.  Given the upper bound, for $1 \leq i \leq 18 = k$,  $m_i$ is defined as follows:
\begin{equation*}
\begin{aligned}
( m_1, m_2,\dots, m_{18} ) = (2, 2, 3, 4, 5, 5, 6, 7, 8, 8, 10, 10, 11, 12, 13, 13, 14, 15).
\end{aligned}
\end{equation*}
As $r-4 = 6$ and $ \gamma_{b,2} \left( P_4 \square P_6 \right) = 6$, set $s=6$. Let $n_{10}$ be the least residue of $n$ modulo $10$ and let $c(n_{10})$ be the constant in the upper bound dependent upon $n_{10}$. Set $t=8$. Observe that, for $n = 21$, 
\begin{equation*}
\begin{aligned}
    \left\lceil \frac{8n}{10}  \right\rceil = 17 < 18=8 \left\lfloor \frac{n}{10}  \right\rfloor + 2 =8 \left\lfloor \frac{n}{10}  \right\rfloor + c(n_{10}) = B(n),
\end{aligned}
\end{equation*}
thus $t \geq 8$. If $t>8$, then there exists an $n > 20$ such that
\begin{equation*}
\begin{aligned}
\frac{9n}{10} \leq \frac{t n}{10}  <B(n) = 8 \left\lfloor \frac{n}{10}  \right\rfloor + c(n_{10}) = \frac{8n}{10} - \frac{8n_{10}}{10} + c(n_{10}) \leq \frac{8n}{10} + \frac{12}{10} \Rightarrow n <12,
\end{aligned}
\end{equation*}
which is a contradiction. As $ProvedLowerBound\left( P_4 \square P_{18}, 6, 8, m_1, m_2,\dots, m_{18}\right)$ is true, the result follows. 
\end{proof}

Running \textit{ProvedLowerBound} for the above values took less than 30 seconds. 
\begin{table}[htbp]
\centering
\begin{tabular}{l|c|c|c|}
& Cost 6 & Cost 7 & Cost 8 \\ \hline
$ \left| \mathcal{C} \right|$ & 1,922,800 & 12,154,870 & 67,920,535  \\\hline
$DoesNotDominate$ & 1,922,790 & 12,153,957 & 67,886,561\\\hline
$ForbiddenBroadcast$& 1 & 546 & 27,525\\\hline
$HasBroadcast$ & 0 & 136 & 4,944\\\hline
$InductiveArgument$ & 7 & 215 & 1,472 \\\hline
$NecessaryBroadcast+HasBroadcast$ &  1 &  8 &  16\\\hline
$NecessaryBroadcast$ & \multirow{2}{*}{0} & \multirow{2}*{6} & \multirow{2}*{12}\\ 
$+InductiveArgument$ & & &\\\hline
$AllSubcases+HasBroadcast$ &  3 &  8 &  61\\ \hline
$AllSubcases$ & \multirow{2}*{13} & \multirow{2}*{28} &  \multirow{2}*{43}\\
$+InductiveArgument$ & & &\\
\end{tabular}
\caption{Cases considered in the proof of Theorem \ref{thm:backtrack:PC4}.}
\label{table:backtrack:PC4}
\end{table}

\begin{cor}\label{cor:backtrack:P4Pn}
(a) For $n \geq 4$, $8 \left\lfloor \frac{ n }{ 10 }  \right\rfloor + c(n_{10}) \leq \gamma_{b,2} \left( P_4 \square P_n \right)$ where $c(n_{10}) $ is dependent upon the least residue $n_{10}$ of $n$ modulo $10$ and given in Table \ref{table:endtermPC4}. (b) The lower bound stated in (a) gives optimal values for $ \gamma_{b,2}(P_4 \square P_n)$ for all $n \equiv 1, 4, 5, \textnormal{ and } 9 \pmod{ 10 }$
\end{cor}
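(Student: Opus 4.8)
The plan for (a) is to reduce directly to Theorem~\ref{thm:backtrack:PC4} via the same monotonicity observation used in the preceding corollaries. Passing from $P_4 \square P_n$ to $P_4 \square C_n$ only adds the wrap-around edges joining column~$1$ to column~$n$, and adding edges can only decrease pairwise distances. Hence any $2$-limited dominating broadcast on $P_4 \square P_n$ is also a $2$-limited dominating broadcast on $P_4 \square C_n$ of the same cost, so $\gamma_{b,2}(P_4 \square C_n) \le \gamma_{b,2}(P_4 \square P_n)$. By Theorem~\ref{thm:backtrack:PC4} the left-hand side equals $8\lfloor n/10\rfloor + c(n_{10})$, which is exactly the bound asserted in (a). I expect this step to be immediate.

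For (b), write $B(n) = 8\lfloor n/10\rfloor + c(n_{10})$. Part (a) already gives $\gamma_{b,2}(P_4 \square P_n) \ge B(n)$, so it suffices to produce a $2$-limited dominating broadcast on $P_4 \square P_n$ of cost exactly $B(n)$ whenever $n \equiv 1,4,5,9 \pmod{10}$; equality then follows. The plan is to build these broadcasts by an explicit periodic construction analogous to the cyclic ones underlying Theorem~\ref{thm:backtrack:PC4}: for each of the four residue classes I would verify by computation (using Gurobi, as elsewhere in the paper) an optimal dominating broadcast of cost $B(n)$ on $P_4 \square P_n$ for the smallest representative $n$ of that class, and then extend to every larger $n$ in the class by repeatedly splicing in a width-$10$ tile carrying total strength $8$. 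Since $B(n+10) = B(n) + 8$, each inserted tile keeps the running cost on the target value, so it remains only to check that the splice introduces no undominated vertex.

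The tile itself is a fixed pattern of broadcasts on $10$ consecutive columns whose broadcast range covers those columns and meshes with the patterns on either side; confirming that no vertex near the two seams is left undominated is a routine but necessary distance check. The genuinely delicate part, and the reason (b) is restricted to $n \equiv 1,4,5,9 \pmod{10}$, lies at the two ends of the path. Unlike every column of a cycle, the end columns~$1$ and~$n$ have neighbours on only one side, so dominating them can cost strictly more than in the cyclic optimum, and the residue-dependent end term $c(n_{10})$ is precisely what must absorb this extra end-domination. I expect the main obstacle to be designing the two end-blocks so that the total cost stays at $B(n)$ exactly for these four residues; for the remaining residues an additional unit of strength appears to be forced, which is why they are excluded (and why the companion corollaries such as Corollary~\ref{cor:backtrack:PnC4} and Corollary~\ref{cor:backtrack:PnC6} carry the ``$0$ or $1$''-style slack there). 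In short, the lower bound in (a) is essentially free, and all of the real work is in the matching upper-bound construction for (b).
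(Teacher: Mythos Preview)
Your argument for (a) is correct and matches the paper exactly: pass from $P_4\square P_n$ to $P_4\square C_n$ and invoke Theorem~\ref{thm:backtrack:PC4}.

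For (b), however, your route diverges from the paper's. You propose to build matching upper-bound broadcasts from scratch via a base case plus width-$10$ tile-splicing construction. The paper does none of this: it simply cites the upper bound $\gamma_{b,2}(P_4\square P_n)\le 8\lfloor n/10\rfloor + d(n_{10})$ already established as Theorem~1 of \cite{slobodinfinal}, reads off the table of values $d(n_{10})$, and observes that $c(n_{10})=d(n_{10})$ precisely when $n_{10}\in\{1,4,5,9\}$. So the entire content of (b) in the paper is a table comparison. Your plan is not wrong in principle---it is presumably what \cite{slobodinfinal} does internally---but it is a sketch rather than a proof, and it duplicates work that the paper treats as a black box. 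The paper's approach is shorter and cleaner given that the external result is available; your approach would be the right one only if you were not permitted to cite \cite{slobodinfinal}.
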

\begin{proof}
(a) As any $2$-limited dominating broadcast on $ P_{4} \square P_{n} $ is a $2$-limited dominating broadcast on $ P_{4} \square C_{n} $, $ \gamma_{b,2} \left( P_{4} \square C_{n} \right) \leq \gamma_{b,2} \left( P_{4} \square P_{n} \right) $. The lower bound follows from Theorem \ref{thm:backtrack:PC4}. (b) Theorem 1 of \cite{slobodinfinal} proves that $\gamma_{b,2} \left( P_4 \square P_n \right) \leq 8 \left\lfloor \frac{ n }{ 10 }  \right\rfloor + d(n_{10})$ where  $d(n_{10})$ is dependent upon the least residue $n_{10}$ of $n$ modulo $10$ and given in Table \ref{table:endtermP4Pn}. The result follows.
\end{proof}

\begin{table}[htbp]
\centering
\begin{tabular}{c|cccccccccccc}
& \multicolumn{12}{c}{Least residue $n_{10}$ of $n$ modulo $10$} \tabularnewline
$n_{10}$ & $0$ & $1$ & $2$ & $3$ & $4$ & $5$ & $6$ &$ 7$ &$ 8$ &$ 9$  \\ \hline
$d(n_{10})$ & $1$ & $2$ & $3$ & $4$ & $4$ & $5$ & $6$ & $7$ & $8$ & $8$ \\
\end{tabular}
\caption{Values of $d(n_{10})$ for  $ \gamma_{b,2} \left( P_4 \square P_{n} \right) $ stated in Theorem 3.1 of \cite{slobodinfinal}.}
\label{table:endtermP4Pn}
\end{table}

\begin{thm}\label{thm:backtrack:PC5}
For $n \geq 3$, $\gamma_{b,2} \left( P_{5} \square C_{n} \right)  =  n +  \begin{cases}
0 & \textnormal{for } n \equiv 0 \pmod{ 2} \textnormal{ and} \\
1 & \textnormal{for } n \equiv 1 \pmod{ 2}. \\
\end{cases}$
\end{thm}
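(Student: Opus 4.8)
The plan is to apply the general machinery of Theorem~\ref{prop:lowerbounddone} verbatim, exactly as rehearsed for this very family in Examples~\ref{firstexample} and~\ref{eg_inductin_confusing}. The upper bound $\gamma_{b,2}(P_5 \square C_n) \le n + (n \bmod 2)$ is already supplied by Theorem~3 of~\cite{slobodinfinal} (the proposition quoted in Example~\ref{firstexample}), so all of the work lies in the matching lower bound $B(n) \le \gamma_{b,2}(P_5 \square C_n)$, where $B(n) = n$ for even $n$ and $B(n) = n+1$ for odd $n$. Following the running example I would fix $r = 8$ and $k = r+8 = 16$, and discharge the base cases $3 \le n \le k+2 = 18$ by direct computation; this licenses the inductive hypothesis for $n_0 \ge k+3 = 19$ demanded by Theorem~\ref{prop:lowerbounddone}.

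Next I would assemble the three remaining inputs to $ProvedLowerBound$. The increments $m_i = \max_{n \ge n_0}\{B(n) - B(n-i)\}$ are pinned down by a one-line parity computation: since $B(n) = n + (n \bmod 2)$, deleting an even number $i$ of columns leaves the parity-correction term unchanged and gives $m_i = i$, while for odd $i$ the difference is maximized when $n$ is odd and $n-i$ even, giving $m_i = i+1$. Hence
\begin{equation*}
(m_1, m_2, \dots, m_{16}) = (2, 2, 4, 4, 6, 6, 8, 8, 10, 10, 12, 12, 14, 14, 16, 16).
\end{equation*}
I would then set $s = \gamma_{b,2}(H_{5,k-12}) = \gamma_{b,2}(P_5 \square P_4)$, the minimum cost of a sub-broadcast on $r$ consecutive columns that dominates its central $r-4 = 4$ columns, computing this small value directly.

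For $t$ I would argue as in Theorems~\ref{thm:backtrack:C5} and~\ref{thm:backtrack:C6}. Taking $n = 19$ (which is odd, so $B(19) = 20$) gives $\frac{8n}{k-8} = \frac{8 \cdot 19}{8} = 19 < 20 = B(19)$, so $t \ge 8$; and if $t \ge 9$ there would be some $n \ge 19$ with $\frac{9n}{8} < B(n) \le n+1$, forcing $n < 8$, a contradiction. Thus $t = 8$, and it remains only to check that $ProvedLowerBound(P_5 \square P_{16}, s, 8, m_1, \dots, m_{16})$ returns true; by Theorem~\ref{prop:lowerbounddone} this yields $B(n) \le \gamma_{b,2}(P_5 \square C_n)$ for all $n$, which combined with the upper bound gives equality.

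The genuinely hard part is not any of these bookkeeping steps but the exhaustive verification hidden inside $ProvedLowerBound$: one must confirm that every member of the (large) collection $\mathcal{C}$ of induced sub-broadcasts of cost between $s$ and $8$ on the central $r = 8$ columns is eliminated by at least one of the six tests of Sections~\ref{middlesec}--\ref{lastsec} (domination, forbidden structures, optimality via $HasBroadcast$, $InductiveArgument$, $NecessaryBroadcast$, and $AllSubcases$). The risk is that some sub-broadcast survives all tests, so that the routine returns \emph{false}. I expect the $P_5$ case to be the most delicate among the path-by-cycle products treated here, because its optimal broadcasts are the tightest (cost essentially $n$, i.e.\ slope $1$), leaving the least slack for the inductive column-deletion argument of Lemma~\ref{thm:alg}. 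Confirming that no case survives is exactly the content of the (Gurobi-assisted) computation.
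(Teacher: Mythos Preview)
Your plan follows the paper's methodology exactly, but your parameter choice differs from the paper's actual proof in a way that matters. The paper does \emph{not} use $r=8$ (the value from the illustrative Examples~\ref{firstexample} and~\ref{eg_inductin_confusing}); it takes $r=11$, hence $k=19$, base cases $3\le n\le 21$, $s=\gamma_{b,2}(P_5\square P_7)=8$, $t=11$, and
\[
(m_1,\dots,m_{19})=(2,2,4,4,6,6,8,8,10,10,12,12,14,14,16,16,18,18,20).
\]
The run of $ProvedLowerBound(P_5\square P_{19},8,11,m_1,\dots,m_{19})$ then exhaustively eliminates all sub-broadcasts of costs $8$ through $11$ (over $2\times10^{11}$ cases, about 30 minutes).

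Your bookkeeping for $r=8$ is internally consistent (the $m_i$ and the determination $t=8$ are correct), but the paper's move from $r=8$ in the worked examples to $r=11$ in the actual proof is almost certainly deliberate: with $r=8$ the window is too narrow for the six elimination tests to dispose of every cost-$8$ sub-broadcast, so $ProvedLowerBound$ would return \textsc{false}. You correctly flag this as the ``risk,'' and here it bites; widening to $r=11$ is precisely the extra idea the paper supplies. Everything else in your outline---upper bound from~\cite{slobodinfinal}, induction via Theorem~\ref{prop:lowerbounddone}, the $m_i$ parity computation, and the determination of $t$---matches the paper once the correct $r$ is substituted.
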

\begin{proof}
Theorem 3 of \cite{slobodinfinal} proves that $ \gamma_{b,2} \left( P_{5} \square C_{n} \right) $ is less than or equal to stated value. Fix $r=11$ and let $k=19=r+8$. By computation, we know that the stated value is optimal for all $3 \leq n \leq 21 = k+2$. Given the upper bound, for $1 \leq i \leq 19 = k$, $m_i$ is defined as follows:
\begin{equation*}
\begin{aligned}
( m_1, m_2,\dots, m_{19} ) = (2, 2, 4, 4, 6, 6, 8, 8, 10, 10, 12, 12, 14, 14, 16, 16, 18, 18, 20).
\end{aligned}
\end{equation*}
As $r-4 = 7$ and $ \gamma_{b,2} \left( P_5 \square P_7 \right) = 8$, set $s=8$. Let $n_2$ be the least residue of $n$ modulo $2$ and let $c(n_2)$ be the constant in the upper bound dependent upon $n_2$. Set $t=11$. Observe that, for $n = 23$, 
\begin{equation*}
\begin{aligned}
    \left\lceil \frac{11n}{11}  \right\rceil = 23 < 24= n + 1= n + c(n_2)= B(n),
\end{aligned}
\end{equation*}
thus $t \geq 11$. If $t>11$, then there exists an $n > 21$ such that
\begin{equation*}
\begin{aligned}
\frac{12n}{11} \leq \frac{t n}{11} < B(n) = n + c(n_2) \leq n+1 \Rightarrow n <11,
\end{aligned}
\end{equation*}
which is a contradiction. As $ProvedLowerBound\left( P_5 \square P_{19}, 8, 11, m_1, m_2,\dots, m_{19}\right)$ is true, the result follows. 
\end{proof}

Running \textit{ProvedLowerBound} for the above values took less than 30 minutes. 
\begin{table}[htbp]
\centering
\begin{tabular}{l|c|c|c|c|}
& Cost 8 & Cost 9 & Cost 10 & Cost 11\\ \hline
$ \left| \mathcal{C} \right|$ & 777,158,275 & 5,239,827,968 & 32,027,967,253 & 179,128,860,188 \\\hline
$DoesNotDominate$ & 777,158,269 & 5,239,826,944 & 32,027,887,652 & 179,125,748,233\\\hline
$ForbiddenBroadcast$ & 0 & 514 & 61,253 & 2,782,915\\\hline
$HasBroadcast$ & 0 & 213 & 14,112 & 311,874\\\hline
$InductiveArgument$ & 5 & 287 & 4,208 & 17,127 \\\hline
$NecessaryBroadcast$ & \multirow{2}*{1} & \multirow{2}*{7} & \multirow{2}*{14} &  \multirow{2}*{21}\\
$+$ $HasBroadcast$ & & & &\\\hline 
$NecessaryBroadcast$ & \multirow{2}{*}{0} & \multirow{2}*{3} & \multirow{2}*{12} &  \multirow{2}*{14}\\
$+InductiveArgument$ & &  & &\\\hline
$AllSubcases$ & \multirow{2}*{0} & \multirow{2}*{0} & \multirow{2}*{1} &  \multirow{2}*{25}\\
$+HasBroadcast$ & & & &\\\hline
$AllSubcases$ & \multirow{2}*{0} & \multirow{2}*{0} &  \multirow{2}*{15} &  \multirow{2}*{26}\\
$+InductiveArgument$ & & & &\\
\end{tabular}
\caption{Cases considered in the proof of Theorem \ref{thm:backtrack:PC5}.}
\label{table:backtrack:PC5}
\end{table}

\begin{cor}\label{cor:backtrack:P5}
For $n \geq 5$, $\gamma_{b,2} \left( P_{5} \square P_{n} \right) = n + \begin{cases}
0 \textnormal{ or } 1 & \textnormal{for } n \equiv 0  \pmod{ 2} \textnormal{ and} \\
1 & \textnormal{for } n \equiv 1 \pmod{ 2}.\\
\end{cases}$
\end{cor}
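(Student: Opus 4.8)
The plan is to mirror the structure of Corollaries \ref{cor:backtrack:CP5} and \ref{cor:backtrack:PnC6}: combine the exact cylinder values from Theorem \ref{thm:backtrack:PC5} (which supply the lower bound) with an explicit construction (which supplies the upper bound). The lower bound comes from a monotonicity-under-edge-addition argument in the column direction. Since $P_n$ is a spanning subgraph of $C_n$, we have $d_{C_n}(u,v) \leq d_{P_n}(u,v)$ for all $u,v$, and because the row contribution is the same in both products, $d_{P_5 \square C_n}(x,y) \leq d_{P_5 \square P_n}(x,y)$ for all $x,y$. Hence any $2$-limited broadcast $f$ that dominates $P_5 \square P_n$ also dominates $P_5 \square C_n$, since $f(x) \geq d_{P_5 \square P_n}(x,y)$ forces $f(x) \geq d_{P_5 \square C_n}(x,y)$. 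In other words, any $2$-limited dominating broadcast on $P_5 \square P_n$ is a $2$-limited dominating broadcast on $P_5 \square C_n$, so
\[
\gamma_{b,2}(P_5 \square C_n) \leq \gamma_{b,2}(P_5 \square P_n).
\]
By Theorem \ref{thm:backtrack:PC5} the left-hand side equals $n$ for even $n$ and $n+1$ for odd $n$, giving $\gamma_{b,2}(P_5 \square P_n) \geq n$ when $n$ is even and $\geq n+1$ when $n$ is odd.

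Next I would invoke the matching upper bound $\gamma_{b,2}(P_5 \square P_n) \leq n+1$ coming from the explicit periodic broadcast constructions of \cite{slobodinfinal} (the $P_5 \square P_n$ analogue of the bound cited for $P_4 \square P_n$ in Corollary \ref{cor:backtrack:P4Pn}(b)), verifying any small cases below the threshold of the construction directly by computation. Pairing the two bounds settles both residue classes: for odd $n$ the lower bound $n+1$ meets the upper bound $n+1$, forcing $\gamma_{b,2}(P_5 \square P_n) = n+1$; for even $n$ the value is confined to $\{n, n+1\}$, which is exactly the corollary statement. No case analysis on the interior of the grid is needed, since all the heavy lifting is already done by Theorem \ref{thm:backtrack:PC5}.

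The main obstacle is the upper bound for the path–path grid, which (unlike the lower bound) does not transfer from the cylinder: because $P_5 \square P_n$ is a subgraph of $P_5 \square C_n$, domination is only harder on the path product, and the inequality $\gamma_{b,2}(P_5 \square P_n) \geq \gamma_{b,2}(P_5 \square C_n)$ points the wrong way. One must therefore exhibit an explicit family of near-optimal broadcasts on the finite grid and control the two boundary columns, where the absent wrap-around edges keep a single strength-$2$ transmitter from simultaneously covering both ends. The extra additive $1$ is precisely the cost of patching this boundary effect, and the delicate point is confirming that one extra unit always suffices so that the upper bound never exceeds $n+1$.
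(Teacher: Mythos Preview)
Your proposal is correct and matches the paper's own proof essentially line for line: the upper bound $\gamma_{b,2}(P_5\square P_n)\le n+1$ is quoted from Theorem~1 of \cite{slobodinfinal}, the lower bound comes from the monotonicity inequality $\gamma_{b,2}(P_5\square C_n)\le\gamma_{b,2}(P_5\square P_n)$ together with Theorem~\ref{thm:backtrack:PC5}, and the two are combined exactly as you describe. Your additional justification of the distance inequality and your discussion of why the upper bound does not transfer from the cylinder are accurate elaborations but not needed for the argument.
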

\begin{proof}
Theorem 1 of \cite{slobodinfinal} proves that  $ \gamma_{b,2} \left( P_{5} \square P_{n} \right) \leq n+1$. As any $2$-limited dominating broadcast on $ P_{5} \square P_{n} $ is a $2$-limited dominating broadcast on $ P_{5} \square C_{n} $, $ \gamma_{b,2} \left( P_{5} \square C_{n} \right) \leq \gamma_{b,2} \left( P_{5} \square P_{n} \right) $. The result follows from Theorem \ref{thm:backtrack:PC5}.
\end{proof}

\section{Future Work}\label{sec:conclusion}
This paper presents a method for computationally proving lower bounds for the $2$-limited broadcast domination of the Cartesian product of two paths, a path and a cycle, and two cycles. Exact values for the $2$-limited broadcast domination number of $C_m\square C_n$ for $3 \leq m \leq 6$ and all $n\geq m$, $P_m \square C_3$ for all $m \geq 3$, and $P_m \square C_n$ for $4\leq m \leq 5$ and all $n \geq m$ have been found, as have periodically optimal values for $P_m \square C_4$ and $P_m \square C_6$ for $m \geq 3$, $P_4 \square P_n$ for $n \geq 4$, and $P_5 \square P_n$ for $n \geq 5$. Our method can likely be extended to other graphs and $k$-limited broadcast domination for $k > 2$. We note the follow rather natural questions.

\begin{problem}
Can this method be optimized further to prove bounds on larger graphs or graph other than the Cartesian product of two paths, a path and a cycle, and two cycles?
\end{problem}

\begin{problem}
Can this method be altered to prove bounds for the $k$-limited broadcast domination number on the Cartesian product of two paths, a path and a cycle, and two cycles?
\end{problem}

\textbf{Note:} Using the methods described in this paper, and an improved backtracking technique, we have also proven, for $n \geq 8$,
$$\gamma_{b,2} \left( C_8 \square C_n \right) = 8 \left\lfloor \frac{ n }{ 6 }  \right\rfloor +  
\begin{cases}
0 & \textnormal{for } n \equiv 0 \pmod{ 6 }, \\
2 & \textnormal{for } n \equiv 1  \pmod{ 6 },\\
4 & \textnormal{for } n \equiv 2 \pmod{ 6 },  \\
6 & \textnormal{for } n \equiv 3 \textnormal{ or } 4  \pmod{ 6 }, \textnormal{ and}   \\
8 & \textnormal{for } n \equiv 5  \pmod{ 6 }. \\
\end{cases}$$
This computation took one year and considered over 223 trillion cases. For each proof in this paper, we used a backtracking algorithm to construct the set $ \mathcal{C}$ of all possible sub-broadcasts. In our improved backtracking algorithm, we forbid the addition of any ForbiddenBroadcast (see Section \ref{sec:forbid}). The number of cases produced by this improved backtrack cannot be verified by P\'{o}lya's Theorem (see \cite[Theorem 14.3.3]{brualdi2010introductory}). As such, these results will be reported elsewhere with an updated methodology and justification.

\acknowledgements
\label{sec:ack}
All computations for this paper we run on machines purchased by Myrvold using NSERC funding. We
would also like to thank the referees for their thoughtful and instructive comments

\nocite{*}
\bibliographystyle{abbrvnat}
\bibliography{mybib}
\label{sec:biblio}

\end{document}